\documentclass{amsart}
\usepackage{amsmath}
\usepackage{amsthm}
\usepackage{amssymb}

\theoremstyle{plain}
\newtheorem{theorem}{Theorem}[section]
\newtheorem{proposition}[theorem]{Proposition}
\newtheorem{corollary}[theorem]{Corollary}
\newtheorem{lemma}[theorem]{Lemma}

\theoremstyle{definition}
\newtheorem{definition}[theorem]{Definition}
\newtheorem{notation}[theorem]{Notation}

\theoremstyle{remark}
\newtheorem{remark}[theorem]{Remark}
\newtheorem{example}[theorem]{Example}

\newcommand{\bF}{\mathbb{F}}
\newcommand{\bN}{\mathbb{N}}

\newcommand{\bQ}{\mathbb{Q}}
\newcommand{\bR}{\mathbb{R}}
\newcommand{\bZ}{\mathbb{Z}}

\newcommand{\cO}{\mathcal{O}}

\newcommand{\cM}{\mathcal{M}}

\newcommand{\cF}{\mathcal{F}}
\newcommand{\cG}{\mathcal{G}}

\newcommand{\fm}{\mathfrak{m}}

\newcommand{\fl}{\mathfrak{l}}
\newcommand{\fp}{\mathfrak{p}}
\newcommand{\fP}{\mathfrak{P}}

\newcommand{\ab}{{\mathrm{ab}}}

\newcommand{\Aut}{{\mathrm{Aut}}}

\newcommand{\Gal}{{\mathrm{Gal}}}
\newcommand{\Inn}{{\mathrm{Inn}}}

\newcommand{\Stab}{{\mathrm{Stab}}}

\newcommand{\Trans}{{\mathrm{Tr}}}

\newcommand{\Spec}{{\mathrm{Spec}\,}}

\newcommand{\Clp}{{\mathrm{Cl}^+}}
\newcommand{\Char}{{\mathrm{char}\,}}
\newcommand{\Lnp}{{\mathrm{ln}^{(p)}}}
\newcommand{\Lnpe}{{\mathrm{ln}^{\fp}}}

\newcommand{\Ln}{{\mathrm{ln}}}

\title{Quandles associated to Galois covers of arithmetic schemes}

\author{Nobuyoshi Takahashi}
\address{
Department of Mathematics, Graduate School of Science, Hiroshima University, 
1-3-1 Kagamiyama, Higashi-Hiroshima, 739-8526 JAPAN}
\email{tkhsnbys@hiroshima-u.ac.jp}

\subjclass[2010]{Primary 14G32; Secondary 20N02, 57M27}
\keywords{Quandle; anabelian geometry}

\begin{document}

\maketitle

\begin{abstract}
Let $X$ be a normal, separated and integral scheme of finite type over $\bZ$ 
and $\cM$ a set of closed points of $X$. 
To a Galois cover $\tilde{X}$ of $X$ unramified over $\cM$, 
we associate a quandle 
whose underlying set consists of points of $\tilde{X}$  lying over $\cM$. 
As the limit of such quandles 
over all \'etale Galois covers and all \'etale abelian covers, 
we define topological quandles 
$Q(X, \cM)$ and $Q^\ab(X, \cM)$, respectively. 

Then we study the problem of reconstruction. 
Let $K$ be $\bQ$ or a quadratic field, 
$\cO_K$ its ring of integers, 
$X=\Spec \cO_K\setminus\{\fp\}$ 
the complement of a closed point 
such that  $\pi_1(X)^\ab$ is infinite, 
and $\cM$ a set of maximal ideals with density $1$. 
Using results from $p$-adic transcendental number theory, 
we show that $K$, $\fp$ and the projection $\cM\to\Spec \bZ$ 
can be recovered from the topological quandle $Q(X, \cM)$ or $Q^\ab(X, \cM)$. 
\end{abstract}

\section{Introduction}

A quandle is a set $Q$ endowed with a binary operation $\rhd$ 
which satisfies the following conditions. 
\begin{enumerate}
\item
$q\rhd q=q$. 
\item
$s_q: Q\to Q; r\mapsto q\rhd r$ is a bijection. 
\item
$q\rhd(r\rhd s)=(q\rhd r)\rhd(q\rhd s)$. 
\end{enumerate}

While there were some previous works on similar algebraic structures, 
it began to be widely studied after the works of 
Joyce(\cite{Joyce1982}) and Matveev(\cite{Matveev1982}) 
relating them to knots. 
To any knot they associated a quandle, called the knot quandle. 
They used it to study invariants of the knot 
and proved that a knot is determined up to equivalence by its knot quandle. 

If $K$ is a knot in $\bR^3$, 
its knot quandle is the set of homotopy classes of ``nooses'' or ``lollipops'' 
with some fixed base point whose boundaries go around $K$ exactly once. 
If $[a]$ and $[b]$ are such classes 
and $\partial a$ denotes the boundary of $a$ regarded as a loop, 
then $[a]\rhd [b]$ is defined as the class of the noose 
given by appending $\partial a$ to $b$. 
Roughly speaking, it is the class of $b$ 
transformed by the monodromy along $\partial a$. 

In number theory, it has been noticed that 
there are various analogous phenomena between prime numbers and knots
(see e.g. \cite{Morishita2012}). 
The purpose of this paper 
is to define a certain analogue of the knot quandle for arithmetic schemes 
and study how much information about an arithmetic scheme one can recover 
from the associated quandle. 
The latter can also be seen as a quandle analogue 
of anabelian geometry, 
i.e. the reconstruction of objects in arithmetic geometry 
from their arithmetic fundamental groups
(see \cite[Ch. XII]{NSW2008}, especially Question 12.3.4). 

Let $X$ be  an arithmetic scheme, 
which will mean a scheme of finite type over $\bZ$ in this paper. 
We assume that it is normal, separated and integral. 
One important example is $\Spec\bZ\setminus\{(p)\}$ 
where $p$ is a prime number. 
This can be seen as an analogue of the complement of a knot in $\bR^3$. 
Let us fix a set $\cM$ of closed points of $X$, 
which will be regarded as a collection of loops. 
Let $\tilde{X}$ be a Galois cover of $X$ unramified over $\cM$. 
We allow ``Galois covers of infinite degree'' 
to include in our consideration the maximal unramified cover $X^{\mathrm{ur}}$ 
and the maximal unramified abelian cover $X^{\mathrm{ur}, \mathrm{ab}}$. 

We define the set $Q(\tilde{X}/X, \cM)$ 
as the inverse image of $\cM$ in $\tilde{X}$ 
and endow it with a quandle operation. 
In the analogue between primes and knots, 
Frobenius maps correspond to monodromies around loops. 
Thus for elements $x$ and $y$ of $Q(\tilde{X}/X, \cM)$, 
denote by $s_x$ the Frobenius automorphism of $\tilde{X}$ 
associated to $x$, 
and define $x\rhd y$ to be $s_x(y)$. 
(See Remark \ref{rem_knot_quandle} 
for how this construction is related to knot quandles.) 
We also define a topology on $Q(\tilde{X}/X, \cM)$ 
and show that $Q(\tilde{X}/X, \cM)$ is a topological quandle. 

Now let $\tilde{X}$ be either $X^{\mathrm{ur}}$ or $X^{\mathrm{ur}, \mathrm{ab}}$. 
We study how much information about $X$ and $\cM$ 
one can recover from $Q=Q(\tilde{X}/X, \cM)$. 
Write $G$ for the group $\Aut(\tilde{X}/X)$. 
Note that the quandle $Q$ has some explicit information 
about points of $\tilde{X}$, 
which $G$ does not. 
On the other hand, 
it is not obvious when $G$ can be reconstructed from $Q$. 

We restrict to the case of an arithmetic curve, 
i.e. an arithmetic scheme $X=\Spec\cO_K\setminus S$ 
where $K$ is a number field and $S$ is a finite set of primes. 
It is easier to recover the information 
when we consider sufficiently many loops. 
Assuming that $\cM$ is of density $1$, 
the Galois group $G$, and hence $\cM=Q/G$ as a set, 
can be reconstructed from the topological quandle $Q$ 
(Corollary \ref{cor_recover_group_and_primes}). 
In particular, $Q(X^{\mathrm{ur}, \mathrm{ab}}/X, \cM)$ 
has a description in terms of $Q(X^{\mathrm{ur}}/X, \cM)$. 

We further specialize to the case where $K$ is rational or quadratic, 
$S$ consists of one prime 
and $\pi_1^{\mathrm{ab}}(X)$ is infinite. 
Then our main theorem shows that 
pretty much can be recovered from $Q(X^{\mathrm{ur}, \mathrm{ab}}/X, \cM)$. 

\begin{theorem}[Theorem \ref{thm_recover}]
Let each of $K$ and $K'$ be either $\bQ$ or a quadratic field, 
$\fp$ and $\fp'$ primes in $K$ and $K'$ respectively, 
and $\cM$ and $\cM'$ sets of primes with density $1$ in $K$ and $K'$, 
respectively. 

Write $X=\Spec\cO_K\setminus\{\fp\}$ 
and $Q=Q(X^{\mathrm{ur}, \mathrm{ab}}/X, \cM)$. 
Define $X'$ and $Q'$ in the same way. 
Assume that $\pi_1^{\mathrm{ab}}(X)$ or $\pi_1^{\mathrm{ab}}(X')$ 
is infinite. 

If $\varphi: Q\to Q'$ is an isomorphism of topological quandles, 
then there exists an isomorphism $\sigma: K\to K'$ which maps $\fp$ to $\fp'$, 
and $\varphi$ commutes with the natural maps to $\Spec\bZ$. 

Furthermore, 
except in the real quadratic case, 
$\sigma$ can be chosen 
so that $\varphi$ is compatible with the isomorphism 
$X\to X'$ induced by $\sigma$. 
\end{theorem}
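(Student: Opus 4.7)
\emph{Plan.} The strategy is to translate the topological quandle isomorphism $\varphi$ into a Galois-theoretic isomorphism equipped with Frobenius data, and then to apply $p$-adic transcendence to reconstruct $K$ and $\fp$. The main obstacle is the transcendence step; everything before it is essentially class field theory bookkeeping.

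First I would apply Corollary~\ref{cor_recover_group_and_primes} to obtain from $\varphi$ a topological group isomorphism $\varphi_G\colon G\to G'$, where $G=\pi_1^{\ab}(X)=\Aut(X^{\mathrm{ur, ab}}/X)$, together with a bijection $\varphi_\cM\colon\cM\to\cM'$ compatible with the Frobenius map $\Phi\colon\cM\to G$, $\fq\mapsto s_x$ for any $x$ above $\fq$ (well-defined because $G$ is abelian). Then I would invoke class field theory to place $G$ in an exact sequence whose kernel is a quotient of the local units $\cO_{K,\fp}^*$ and whose cokernel is a finite ray class group. Since $\cO_{K,\fp}^*\cong\mu\times\bZ_p^{[K_\fp:\bQ_p]}$ up to finite groups, the pro-$\ell$ part of $G$ is infinite precisely for $\ell=p$, so the residue characteristic of $\fp$ is a group-theoretic invariant of $G$ and $p=p'$. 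A parallel argument, using the action of $\Phi(\fq)$ on $p$-power roots of unity inside $X^{\mathrm{ur, ab}}$ (i.e., the cyclotomic character), recovers the residue characteristic of $\fq$ from $\Phi(\fq)\in G$, which gives the compatibility of $\varphi_\cM$ with the natural maps to $\Spec\bZ$.

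The hard step is to recover $K$ and $\fp$ themselves, not just their residue characteristics. The local contribution at $\fp$ embeds (a quotient of) $\cO_{K,\fp}^*$ into $G$, and each $\Phi(\fq)$ corresponds to the image of a uniformizer of $\fq$ modulo global units. Passing to $p$-adic logarithms, the intertwining $\varphi_G\circ\Phi=\Phi'\circ\varphi_\cM$ becomes the assertion that a density-one family of $p$-adic logarithms of rational primes (or, in the quadratic case, of prime generators of $\cO_K$) is carried to a similar family in $K'_{\fp'}$. Here I would invoke the $p$-adic analogue of Baker's theorem on linear independence of $p$-adic logarithms of algebraic numbers (Brumer's theorem and its refinements): the only continuous group-theoretic matchings of such logarithm data come from an honest isomorphism of completed local fields $K_\fp\to K'_{\fp'}$, which in the rational or quadratic setting must descend to a field isomorphism $\sigma\colon K\to K'$ sending $\fp$ to $\fp'$. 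This is where the bulk of the real work lies.

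For the ``furthermore'' clause: in the rational and imaginary quadratic cases the global unit group is finite modulo torsion, so $\sigma$ is essentially unique and the induced scheme map $X\to X'$ is automatically compatible with $\varphi$. In the real quadratic case, however, the fundamental unit has infinite order, and its image in $\cO_{K,\fp}^*$ need not exhaust the local units (its closure is controlled by the $p$-adic regulator). This leaves an extra continuous automorphism of $G$ preserving all Frobenius images that need not come from any automorphism of $X$, which is precisely why compatibility can fail and only the bare existence of $\sigma$ is asserted in that case.
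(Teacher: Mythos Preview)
Your setup is correct through the recovery of $G$, $\cM$, and $p$, but the two substantive steps that follow both contain genuine gaps.

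First, the cyclotomic character argument does not go through. From $Q$ you recover $G=\pi_1^{\ab}(X)$ only as an abstract profinite group (the closure of $\Inn(Q)$ in $C(Q,Q)$); you do not recover $X^{\mathrm{ur,ab}}$ as a scheme, nor any distinguished quotient of $G$ identified with $\Gal(K(\zeta_{p^\infty})/K)$. Any continuous automorphism of $G$ scrambles what you would like to call ``the cyclotomic character,'' so there is no intrinsic way to read off $N(\fq)$ or $c(\fq)$ from $\Phi(\fq)\in G$ alone. The paper flags exactly this obstruction in the introduction (``our reconstruction of $\pi_1^{\ab}(X)$ from $Q$ does not provide us with this map''), and in its proof the residue characteristic $c(\fl)$ is recovered only \emph{after} and \emph{by means of} the transcendence step, not before it.

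Second, Brumer's $p$-adic Baker theorem is the wrong tool. It asserts that $p$-adic logarithms of algebraic numbers which are $\bQ$-linearly independent remain $\overline{\bQ}$-linearly independent; but your unknown $\psi\colon V\to V'$ is an arbitrary $\bQ_p$-linear map with no reason to have algebraic entries, so Brumer says nothing. The paper instead uses the $p$-adic Six Exponentials Theorem (and Waldschmidt's rank inequality in the $R=2$ case): one assembles a $2\times 3$ (resp.\ $3\times 7$) matrix of $p$-adic logarithms, observes that the existence of $\psi$ forces its rank to be at most $R$, and that multiplicative independence of the chosen primes makes the columns $\bQ$-independent. Six Exponentials then forces the \emph{rows} to be $\bQ$-dependent, which is precisely the statement that $\fl_i$ and $\bar\varphi(\fl_i)$ are multiplicatively dependent, hence have the same residue characteristic; $K$ is then pinned down. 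Your diagnosis of the real quadratic exception is also off: the obstruction is not an extra automorphism of $G$ coming from the regulator, but rather that in that case $r\colon\cM\to V$ factors through $\Lnp N(\cdot)$ and so fails to separate $\fl$ from $\bar\fl$; the paper's final example exhibits quandle automorphisms swapping such conjugate pairs.
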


The basic idea for our proof is as follows. 
The Galois group $\pi_1^{\mathrm{ab}}(X)$ is 
already recovered in Corollary \ref{cor_recover_group_and_primes}, 
and the quandle structure gives us 
a map $r: \cM\to \pi_1^{\mathrm{ab}}(X)$, 
which is essentially the global reciprocity map in class field theory. 
According to class field theory 
there is a natural homomorphism $\cO_{K_\fp}^\times\to \pi_1^{\mathrm{ab}}(X)$ 
compatible with $r$, with finite cokernel. 
If available, this could be used as a ruler to help recovering $\fl$ from $r(\fl)$. 
Unfortunately, our reconstruction of $\pi_1^{\mathrm{ab}}(X)$ from $Q$ 
does not directly provide us with this map. 
Instead we may think of $r(\cM)$ as a sort of unlabelled scale: 
We look at ``arrangements'' $\{r(\fl_1), \dots, r(\fl_k)\}\subset\pi_1^{\mathrm{ab}}(X)$ 
for $\fl_1, \dots, \fl_k\in\cM$. 
Applying results from $p$-adic transcendental number theory, 
the $p$-adic Six-Exponentials Theorem and its generalizations, 
to certain ratios formed from $r(\fl_i)$, 
we show that if an isomorphism $\pi_1^{\mathrm{ab}}(X)\to \pi_1^{\mathrm{ab}}(X')$ 
is compatible with a bijection $\cM\to\cM'$, 
then the conclusion of the theorem must hold. 

\begin{remark}
Let us compare our approaches and results 
with those of previous works. 

(1)
In Tamagawa's reconstruction(\cite{Tamagawa1997}) of affine curves over finite fields 
from their Galois groups, 
the decomposition groups of points are characterized 
by group-theoretical properties, 
and the function field is recovered from these groups. 
In our case, while the map $r$ gives us 
decomposition groups at many unramified primes 
together with natural generators, 
we do without recovering 
the decomposition groups at ramified points. 
All we do here is to prove that there is no unexpected isomorphism 
of associated quandles, 
with the help of transcendental number theory. 
As a drawback, we do not know how to directly reconstruct $K$ as a set 
at this moment.  

(2)
Unlike in the case of curves over fields, 
there seem to be few works on reconstruction of arithmetic curves. 
Ivanov \cite{Ivanov2014} studies the problem of reconstruction 
of an arithmetic curve from its arithmetic fundamental group 
under the assumption that at least $2$ rational primes are invertible in $\cO_{K, S}$ 
(or, equivalently, lie completely below $S$). 

(3)
Mochizuki \cite[Theorem 6.4]{Mochizuki2008} associates a certain category $\mathcal{C}$, 
called an arithmetic Frobenioid, 
to a number field $F$ and its Galois extension $\tilde{F}$, 
and reconstructs $F$ and $\tilde{F}$ from 
the associated arithmetic Frobenioid $\mathcal{C}$. 

We note an intriguing similarity in the proofs of this theorem and ours, 
although the author does not know exactly how they are related. 
The proof in \cite[Theorem 6.4]{Mochizuki2008} applies 
\emph{archimedean} (i.e. original) Six-Exponentials Theorem 
in order to recover residue characteristics of points, 
while we use $p$-adic Six-Exponentials Theorem. 
\end{remark}

\medbreak
The rest of this paper is organized as follows. 
In Section 2, we give a brief introduction to quandles 
and topological quandles. 
In Section 3, 
we associate a topological quandle to a Galois cover 
of an arithmetic scheme and a set of unbranched points. 
Then we restrict to the case of arithmetic curves  
and prove results on reconstruction in Section 4. 
Finally, we study the automorphism groups 
of quandles associated to Galois covers in Section 5. 

\section{Quandles}

Let us recall the definition of a quandle. 
Note that our convention is a little different from that of Joyce. 

\begin{definition}
A quandle is a set $Q$ equipped with a binary operation $\rhd$ 
satisfying the following axioms. 
\begin{enumerate}
\item
For any $q\in Q$, $q\rhd q=q$ holds. 
\item
For any $q, r\in Q$, 
there exists a unique element $r'\in Q$ such that $q\rhd r'=r$. 
\item
For any $q, r, s\in Q$, 
$q\rhd(r\rhd s)=(q\rhd r)\rhd(q\rhd s)$. 
\end{enumerate}
We define $s_q: Q\to Q$ by $s_q(r)=q\rhd r$, 
which is bijective by (2), 
and write $q\rhd^{-1}r$ for $s_q^{-1}(r)$. 
\end{definition}

\begin{definition}
We define the automorphism group of $Q$ as 
\[
\Aut(Q):=\{ f: Q\to Q \mid f(q\rhd r)=f(q)\rhd f(r)\hbox{ and $f$ is bijective}\}, 
\]
the inner automorphism group of $Q$ as 
\[
\Inn(Q):=\langle s_q \mid q\in Q\rangle 
\]
and the group of transvections as 
\[
\Trans(Q):=\{s_{q_1}^{e_1}\cdots s_{q_n}^{e_n}
\mid n\in\bN, q_1, \dots, q_n\in Q, \sum_{i=1}^n e_i=0\}. 
\]
Since $s_q$ is an automorphism of $Q$ by Axioms (2) and (3), 
$\Inn(Q)$ is in fact a subgroup of $\Aut(Q)$. 
\end{definition}

\begin{example}
If $G$ is a group, 
each of the operations $g\rhd h:=g^{-1}hg$ and $g\rhd h:=ghg^{-1}$ 
makes $G$ a quandle. 
Such a quandle is called a conjugation quandle. 
\end{example}

\begin{example}\label{ex_quandle_representation}
Let $G$ be a group, 
$\{z_\lambda\}_{\lambda\in\Lambda}$ 
a family of elements of $G$ parametrized by $\Lambda$ 
and $\{H_\lambda\}_{\lambda\in\Lambda}$ 
a family of subgroups of $G$ 
such that $z_\lambda\in H_\lambda$ and 
$H_\lambda$ is contained in the centralizer of $z_\lambda$. 

On the set $Q:=\coprod_{\lambda\in\Lambda} G/H_\lambda$, 
define the following binary operation. 
If $x$ and $y$ are elements of $G$ and 
$\lambda$ and $\mu$ are elements of $\Lambda$, 
then 
\[
xH_\lambda\rhd yH_\mu := x z_\lambda x^{-1}y H_\mu. 
\]
This operation makes $Q$ a quandle, 
which we denote by $Q(G, \{H_\lambda\}, \{z_\lambda\})$. 
\end{example}

\begin{definition}
An augmented quandle is a pair $(Q, G)$ of a set $Q$ and a group $G$ 
together with an action of $G$ on $Q$ and a map $\varepsilon: Q\to G$ 
(called the augmentation map) 
satisfying the following conditions. 
\begin{itemize}
\item
$\varepsilon(q)q=q$. 
\item
$\varepsilon(gq)=g\varepsilon(q)g^{-1}$. 
\end{itemize}
Then we define an operation $\rhd$ by $q\rhd r:=\varepsilon(q)r$, 
which is easily seen to satisfy the quandle axioms. 

If $(Q, G)$ and $(Q', G')$ are augmented quandles, 
a homomorphism of augmented quandles from $(Q, G)$ to $(Q', G')$ 
is a pair of maps $Q\to Q'$ and $G\to G'$ 
compatible with the actions and the augmentation maps. 
\end{definition}
\begin{remark}
(1)
It also follows 
that $G$ acts on $Q$ by quandle automorphisms 
and $\varepsilon$ is a quandle homomorphism 
if we regard $G$ as a conjugation quandle. 

(2)
It is easy to see that $(Q, \Aut(Q))$ and $(Q, \Inn(Q))$ are 
augmented quandles by the mapping $q\mapsto s_q$ 
and that the associated operations coincide with 
the original operation on $Q$. 
\end{remark}

Next we introduce notions of topological quandle 
and topological augmented quandle. 

\begin{definition}
A topological quandle is a topological space $Q$ endowed with 
a quandle operation $\rhd$ 
such that $Q\times Q\to Q; (q, r)\mapsto q\rhd r$ 
and $Q\times Q\to Q; (q, r)\mapsto q\rhd^{-1} r$ 
are continuous. 

A topological augmented quandle is a pair $(Q, G)$ 
of a topological space $Q$ and a topological group $G$ 
together with a continuous action of $G$ on $Q$ 
and a continuous map $\varepsilon: Q\to G$ 
satisfying $\varepsilon(q)q=q$ and $\varepsilon(gq)=g\varepsilon(q)g^{-1}$. 
Again, $Q$ is a topological quandle by the operation $q\rhd r:=\varepsilon(q)r$. 
\end{definition}

\begin{remark}
In the definition of topological quandles in 
\cite[Definition 2.1]{Rubinsztein2007}, 
it is only assumed that $(q, r)\mapsto q\rhd r$ is continuous 
and $s_q$ is a homeomorphism for any $q\in Q$. 
The author doesn't know if this definition 
is equivalent to ours. 
\end{remark}

\begin{proposition}\label{prop_quandle_representation}
Let $(Q, G)$ be an augmented quandle 
and let $\{q_\lambda\}_{\lambda\in G\backslash Q}$ 
be a complete system of representatives 
for the quotient set $G\backslash Q$. 
Then there is a natural isomorphism of 
$Q(G, \{\Stab_G(q_\lambda)\}, \{\varepsilon(q_\lambda)\})$ and $Q$. 

If $(Q, G)$ is a topological augmented quandle, 
$Q$ is Hausdorff, 
each $G$-orbit in $Q$ is open 
and $G$ is compact, 
then the above isomorphism is also an isomorphism 
of topological quandles. 
\end{proposition}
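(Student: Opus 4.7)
The plan is to construct the natural comparison map
\[
\Phi: Q(G, \{\Stab_G(q_\lambda)\}, \{\varepsilon(q_\lambda)\}) \to Q, \qquad gH_\lambda \mapsto g\cdot q_\lambda,
\]
where $H_\lambda := \Stab_G(q_\lambda)$, and to verify that it is a quandle isomorphism. Well-definedness is built into the definition of $H_\lambda$; surjectivity follows because $\{q_\lambda\}$ meets every $G$-orbit; injectivity uses that if $gq_\lambda = g'q_\mu$, then the two points lie in the same orbit (forcing $\lambda = \mu$), and then $g^{-1}g' \in \Stab_G(q_\lambda) = H_\lambda$. Compatibility with the quandle operation is a direct computation using $\varepsilon(gq) = g\varepsilon(q)g^{-1}$:
\[
\Phi(xH_\lambda \rhd yH_\mu) = x\varepsilon(q_\lambda) x^{-1} y q_\mu = \varepsilon(xq_\lambda)(yq_\mu) = \Phi(xH_\lambda) \rhd \Phi(yH_\mu).
\]

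For the topological statement, I would equip $Q(G, \{H_\lambda\}, \{\varepsilon(q_\lambda)\})$ with the disjoint-union of the quotient topologies on the $G/H_\lambda$. On each component, $\Phi$ factors the orbit map $g \mapsto gq_\lambda$ through $G \to G/H_\lambda$, so continuity follows from continuity of the $G$-action on $Q$; taking the disjoint union gives global continuity. To promote $\Phi$ to a homeomorphism, I would argue componentwise: since $G$ is compact, so is $G/H_\lambda$; since $Q$ is Hausdorff, the orbit $Gq_\lambda$ is Hausdorff as a subspace; hence the continuous bijection $G/H_\lambda \to Gq_\lambda$ is automatically a homeomorphism. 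Finally, the orbits $Gq_\lambda$ are open and partition $Q$, hence also closed, so the topology on $Q$ coincides with the disjoint-union topology on $\coprod_\lambda Gq_\lambda$, and the local homeomorphisms assemble into a global one.

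The main subtlety, modest as it is, lies in the passage from continuous bijection to homeomorphism on each orbit: without compactness of $G$ the orbit map $G/H_\lambda \to Gq_\lambda$ need not be open, which explains the compactness hypothesis. The openness of orbits is then exactly what is needed to glue the componentwise statements into a global homeomorphism; it cannot be replaced by Hausdorffness alone, since otherwise the coproduct topology on $\coprod_\lambda Gq_\lambda$ may be strictly finer than the subspace topology inherited from $Q$.
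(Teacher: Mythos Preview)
Your proof is correct and follows essentially the same route as the paper's. The only difference is cosmetic: the paper outsources the discrete case to \cite[Theorem 7.2]{Joyce1982} and, for the topological part, phrases the compact-to-Hausdorff step as a direct closed-map argument, whereas you invoke the standard fact that a continuous bijection from a compact space to a Hausdorff space is a homeomorphism; the content is identical.
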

\begin{proof}
For discrete quandles, 
the proof is the same as that of \cite[Theorem 7.2]{Joyce1982}. 

In the continuous case, 
let $\pi: G\to G/\Stab_G(q_\lambda)$ be the natural map, 
define $a: G\to Gq_\lambda$ by $a(g)=gq_\lambda$ 
and write $\bar{a}: G/\Stab_G(q_\lambda)\to Gq_\lambda$ 
for the induced bijection. 
The continuity of $\bar{a}$ follows from that of $a$. 
If $K$ is a closed subset of $G/\Stab_G(q_\lambda)$, 
then $\pi^{-1}(K)$ is compact. 
Since $Q$ is Hausdorff, 
$\bar{a}(K)=a(\pi^{-1}(K))$ is closed. 
Thus $\bar{a}$ is a homeomorphism. 

Since $Gq_\lambda$ is open in $Q$, 
the natural map 
$Q(G, \{\Stab_G(q_\lambda)\}, \{\varepsilon(q_\lambda)\})\to Q$
is a homeomorphism. 
\end{proof}

\section{Quandles associated to Galois covers}

Let $X$ be a normal, separated and integral scheme 
of finite type over $\bZ$. 
Let $K=K(X)$ be the function field of $X$ 
and $L$ an algebraic extension of $K$, 
possibly of infinite degree. 

We denote by $X_L$ the normalization of $X$ in $L$. 
Explicitly, it can be described as follows. 
For an open affine subset $\Spec A$ of $X$, 
let $A_L$ be the integral closure of $A$ in $L$. 
If $f$ is an element of  $A$, 
the integral closure $(A_f)_L$ of $A_f$ in $L$ 
is equal to $(A_L)_f$, 
the localization of $A_L$ by $f$. 
This allows one to glue the affine schemes $\Spec A_L$ together 
into a scheme $X_L$. 

Let $X_L^0$ denote the set of closed points of $X_L$. 
This set can be described in terms of finite extensions. 
Let 
\[
\cF(L/K) :=\{M \mid \hbox{$M$ is an intermediate field of $L/K$ which is finite over $K$}\}
\]
and 
\[
\cG(L/K) :=\{M\in \cF(L/K) \mid \hbox{$M$ is a Galois extension of $K$}\}. 
\]
For a point $P$ of a scheme, 
let $\bF(P)$ denote the residue field at $P$. 

\begin{proposition}
(1)
If $L\supseteq M\supseteq K$ are algebraic field extensions, 
the natural map $\varphi_{L, M}: X_L\to X_M$ is surjective, 
and $\varphi_{L, M}^{-1}(X_M^0)=X_L^0$. 

(2)
A point $x\in X_L$ is closed if and only if 
$\bF(x)$ is algebraic over a finite field. 

(3)
There is a natural bijection 
between $X_L^0$ and $\varprojlim_{M\in\cF(L/K)} X_M^0$. 
If $L$ is a Galois extension of $K$, 
then $X_L^0$ is also in bijection with 
$\varprojlim_{M\in\cG(L/K)} X_M^0$. 
\end{proposition}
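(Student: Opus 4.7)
The plan is to prove the three parts in order, reducing throughout to the affine case where $X_L$, $X_M$, and $X$ are the spectra of the integral closures $A_L$, $A_M$, and $A$ of an affine open $\Spec A \subseteq X$ in $L$, $M$, and $K$ respectively. For (1), the inclusion $A_M \subseteq A_L$ is an integral extension of commutative rings (possibly non-Noetherian, but this causes no trouble); surjectivity of $\varphi_{L,M}$ is then the lying-over theorem. The identity $\varphi_{L,M}^{-1}(X_M^0) = X_L^0$ reduces to the standard fact that in an integral extension $A_M \subseteq A_L$, a prime $\fP \subset A_L$ is maximal if and only if $\fP \cap A_M$ is maximal in $A_M$, which in turn follows from the ``field iff field'' characterization of integral extensions of domains.

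For (2), I would combine (1) (applied with $M = K$) with the fact that $X$, being of finite type over $\bZ$, is Jacobson: a point $z \in X$ is closed if and only if $\bF(z)$ is a finite field. Thus $x \in X_L$ is closed iff $\bar{x} := \varphi_{L, K}(x) \in X$ is closed iff $\bF(\bar{x})$ is finite. One direction is then immediate: since integral ring extensions induce algebraic residue-field extensions, if $\bF(\bar{x})$ is finite then $\bF(x)$ is algebraic over a finite field. For the converse, if $\bF(x)$ is algebraic over a finite field, so is the subfield $\bF(\bar{x})$; but $\bF(\bar{x})$ is a field finitely generated as a $\bZ$-algebra, so Zariski's lemma (the arithmetic Nullstellensatz) forces it to be finite, whence $\bar{x}$ is closed.

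For (3), the key identification is $X_L = \varprojlim_{M \in \cF(L/K)} X_M$. Affine-locally this follows from $A_L = \varinjlim_{M \in \cF(L/K)} A_M$ (integral closure commutes with filtered unions of subfields), together with the continuity of $\Spec$ on filtered colimits of rings at the level of underlying topological spaces. By (1) the transition maps restrict to closed points, so they induce a map $X_L^0 \to \varprojlim_M X_M^0$; injectivity is the inverse-limit identification, and surjectivity follows from (1), since any $x \in X_L$ mapping to a closed point in some (equivalently, every) $X_M$ must itself be closed. In the Galois case, the Galois closure of any $M \in \cF(L/K)$ again lies in $L$, so $\cG(L/K)$ is cofinal in $\cF(L/K)$ and the two inverse limits coincide. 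No step is deep; the part most likely to require a careful formulation is the inverse-limit identification in (3), where one must remember that ``limit'' here is taken in topological spaces (or in sets), not in finite-type $\bZ$-schemes, since the objects $X_M$ for infinite $M/K$ leave the finite-type world.
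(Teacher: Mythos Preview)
Your proposal is correct and follows essentially the same approach as the paper: lying-over and going-up for (1), the arithmetic Nullstellensatz for (2), and the identification $A_L = \varinjlim_{M} A_M$ for (3), with cofinality of $\cG(L/K)$ in $\cF(L/K)$ handling the Galois addendum. The only cosmetic difference is that in (3) you invoke the general fact that $\Spec$ takes filtered colimits of rings to limits of spaces, whereas the paper writes out the bijection $\fm \leftrightarrow (\fm \cap A_M)_M$ by hand and checks injectivity and surjectivity directly; these are the same argument.
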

\begin{proof}
(1)
The surjectivity follows from the local description of $X_L$ above 
and Lying-Over Theorem. 
A point $x$ of $X_L$ is closed if and only if 
it is closed in any affine open neighborhood 
of the form $\Spec A_L$. 
The latter condition is equivalent to 
saying that $\varphi_{L, K}(x)$ is closed in $\Spec A$ 
by Lying-Over and Going-Up Theorem. 
Thus $x$ is closed in $X_L$ if and only if 
$\varphi_{L, K}(x)$ is closed in $X$, 
i.e. $\varphi_{L, K}^{-1}(X_K^0)=X_L^0$. 
Hence $\varphi_{L, M}^{-1}(X_M^0)=X_L^0$ holds. 

(2)
It is well-known that a point $P$ of 
a scheme of finite type over $\bZ$ is closed 
if and only if $\bF(P)$ is finite. 
Since $\bF(P)$ is the quotient field 
of a finitely generated $\bZ$-algebra, 
it is finite if and only if 
it is algebraic over a finite field. 

Let $x$ be a point of $X_L$. 
Then $\bF(x)$ is algebraic over $\bF(\varphi_{L, K}(x))$. 
If $x$ is closed, then $\varphi_{L, K}(x)$ is closed by (1) 
and $\bF(\varphi_{L, K}(x))$ is finite. 
Conversely, if $\bF(x)$ is algebraic over a finite field, 
then so is $\bF(\varphi_{L, K}(x))$, and 
$\varphi_{L, K}(x)$ is closed. 
By (1), $x$ is closed. 

(3)
Note that $x$ is closed in $X_L$ if and only if 
$x$ is closed in \emph{some} neighborhood of the form $\Spec A_L$ 
by (2). 
Thus it suffices to prove the assertion for $X=\Spec A$. 

One can think of $A_L$ as the union of $A_M$ for all $M\in\cF(L/K)$. 
If $\fm\subset A_L$ is a maximal ideal, 
then $(\fm\cap A_M)_{M\in\cF(L/K)}$ is an element of 
$\varprojlim_{M\in\cF(L/K)} X_M^0$. 
Thus we have a natural map 
$f: X_L^0\to\varprojlim_{M\in\cF(L/K)} X_M^0$. 
From $\fm = \bigcup_{M\in\cF(L/K)} (\fm\cap A_M)$ 
it follows that $f$ is injective. 

If $(\fm_M)_{M\in\cF(L/K)}\in \varprojlim_{M\in\cF(L/K)} X_M^0$, 
then $\fp:=\bigcup \fm_M$ is a prime ideal of $A_L$. 
Since $\fm_M\cap M'=\fm_{M'}$ holds for $M\supseteq M'$, 
we have $\fp\cap M=\fm_M$. 
From (1) we see that $\fp$ is maximal, i.e. $\fp\in X_L^0$, 
and $f(\fp)$ is equal to $(\fm_M)_{M\in\cF(L/K)}$. 
Thus $f$ is surjective. 

If $L/K$ is a Galois extension, 
then $L$ is the union of finite Galois extensions, 
so we may restrict to Galois extensions $M$. 
\end{proof}

In the rest of this section, 
let $\cM$ be a subset of $X^0$ 
and $L$ a Galois extension of $K$ 
such that the Galois cover $\tilde{X}=X_L$ of $X$ is 
unramified over $\cM$. 

\begin{definition}
We define $Q(\tilde{X}/X, \cM)$ to be 
the set of points of $\tilde{X}$ lying over $\cM$. 
Using the notation in the proposition above, 
$Q(\tilde{X}/X, \cM)=\varphi_{L, K}^{-1}(\cM)$. 

If $M$ and $M'$ are intermediate fields of $L/K$ 
which are Galois extensions of $K$ satisfying $M\supseteq M'$, 
let $\pi_{M, M'}: Q(X_M/X, \cM)\to Q(X_{M'}/X, \cM)$ 
denote the restriction of $\varphi_{M, M'}$. 

Let $X^{\mathrm{ur}}$ 
denote the maximal unramified cover of $X$ 
and $X^{\mathrm{ur}, \mathrm{ab}}$ 
the maximal abelian unramified cover of $X$. 
We define $Q(X, \cM)$ and $Q^{\mathrm{ab}}(X, \cM)$ 
as $Q(X^{\mathrm{ur}}/X, \cM)$ and $Q(X^{\mathrm{ur, ab}}/X, \cM)$, 
respectively. 
\end{definition}

\begin{proposition}
(1)
The map $\pi_{L, M}$ is surjective. 

(2)
$Q(\tilde{X}/X, \cM)=\varprojlim_{M\in\cG(L/K)} Q(X_M/X, \cM)$. 
\end{proposition}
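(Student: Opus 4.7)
The plan is that both statements follow quite directly from the previous proposition, with only minor bookkeeping to ensure that passing to preimages of $\cM$ preserves the relevant structure.

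For part (1), I would start from the identity $\varphi_{L,K} = \varphi_{M,K}\circ\varphi_{L,M}$, which is immediate from the construction of the normalization. Given $y\in Q(X_M/X,\cM)=\varphi_{M,K}^{-1}(\cM)$, the previous proposition tells us that $\varphi_{L,M}$ is surjective, so there exists $x\in X_L$ with $\varphi_{L,M}(x)=y$. Then $\varphi_{L,K}(x)=\varphi_{M,K}(y)\in\cM$, hence $x\in\varphi_{L,K}^{-1}(\cM)=Q(\tilde{X}/X,\cM)$, and $\pi_{L,M}(x)=y$ by definition of $\pi_{L,M}$ as the restriction of $\varphi_{L,M}$.

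For part (2), the previous proposition already gives a natural bijection $f:X_L^0\xrightarrow{\sim}\varprojlim_{M\in\cG(L/K)}X_M^0$ sending $x$ to the compatible system $(\varphi_{L,M}(x))_M$, where we may restrict to Galois intermediate fields because $L/K$ is Galois. I would verify that this bijection identifies the subset $Q(\tilde{X}/X,\cM)\subseteq X_L^0$ with the subset $\varprojlim_{M\in\cG(L/K)}Q(X_M/X,\cM)\subseteq\varprojlim_{M\in\cG(L/K)}X_M^0$. Using $\varphi_{L,K}=\varphi_{M,K}\circ\varphi_{L,M}$ again, we have $x\in\varphi_{L,K}^{-1}(\cM)$ iff $\varphi_{L,M}(x)\in\varphi_{M,K}^{-1}(\cM)=Q(X_M/X,\cM)$ for every $M\in\cG(L/K)$. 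Hence $f$ restricts to the claimed bijection, and the transition maps under $f$ are precisely the $\pi_{M,M'}$, so the limit identification is compatible.

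No essential obstacle arises: part (1) reduces to surjectivity of $\varphi_{L,M}$ together with the compatibility of the $\varphi$'s with projection to $X$, and part (2) is a purely formal consequence of the previous proposition once one observes that being a preimage of $\cM$ is a condition checkable at any single finite Galois layer $M$. The only step that deserves a brief word is the reduction to $\cG(L/K)$ in the inverse limit, which is justified because every element of $\cF(L/K)$ is contained in some element of $\cG(L/K)$ (take the Galois closure inside $L$), so the two inverse systems are cofinal.
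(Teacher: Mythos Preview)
Your proposal is correct and follows exactly the approach the paper intends: the paper's proof consists of the single sentence ``This follows immediately from the previous proposition,'' and you have simply written out the immediate deduction in detail. The bookkeeping you supply (factoring $\varphi_{L,K}=\varphi_{M,K}\circ\varphi_{L,M}$, and restricting the bijection $X_L^0\cong\varprojlim_{M\in\cG(L/K)}X_M^0$ to the preimage of $\cM$) is precisely what the paper is leaving to the reader.
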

\begin{proof}
This follows immediately 
from the previous proposition. 
\end{proof}

When $[L: K]<\infty$, 
we endow $Q(\tilde{X}/X, \cM)$ with the discrete topology. 
In the general case, we endow $Q(\tilde{X}/X, \cM)$ with the topology 
as a projective limit, 
i.e. the topology induced by the product topology 
on $\prod_{M\in\cG(L/K)} Q(X_M/X, \cM)$. 

Since $\pi_{L, K}^{-1}(a)$ is a compact open set for any $a\in \cM$, 
we see that $Q(\tilde{X}/X, \cM)$ is a locally compact space.

\begin{notation}
If $x$ is an element of $Q(\tilde{X}/X, \cM)$ 
we write $q(x)$ for $\#\bF(\pi_{L, K}(x))$ 
and $c(x)$ for $\Char\bF(x)$. 
We may identify $c$ with the natural map $Q(\tilde{X}/X, \cM)\to\Spec\bZ$. 

We denote by $s_x$ the Frobenius automorphism of $\tilde{X}$ associated to $x$, 
i.e. the unique element of $\Aut(\tilde{X}/X)$ such that 
$s_x(x)=x$ and $s_x^*(f)\equiv f^{q(x)}\mod \fm_x$ for any $f\in \cO_{\tilde{X}, x}$, 
where $\fm_x$ is the maximal ideal of $\cO_{\tilde{X}, x}$. 

We denote the corresponding element of $\Gal(L/K)$ by $F_x$. 
\end{notation}

\begin{definition}
Define the binary operation $\rhd$ on $Q(\tilde{X}/X, \cM)$ by
\[
x\rhd y := s_x(y). 
\]
\end{definition}

\begin{proposition}\label{prop_arithmetic_quandle}
(1)
The pair $(Q(\tilde{X}/X, \cM), \Aut(\tilde{X}/X))$, 
together with the natural action and the map 
\[
s_L: Q(\tilde{X}/X, \cM)\to \Aut(\tilde{X}/X);\ x\mapsto s_x, 
\]
is a topological augmented quandle 
with the associated quandle operation $\rhd$. 

(2)
For an intermediate Galois extension $M$ of $L/K$, 
the projection maps $\pi_{L, M}: Q(\tilde{X}/X, \cM)\to Q(X_M/X, \cM)$ 
and $\pi'_{L, M}: \Aut(\tilde{X}/X)\to\Aut(X_M/X)$
form a homomorphism of topological augmented quandles. 
\end{proposition}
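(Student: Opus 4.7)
The plan is to verify that $Q := Q(\tilde{X}/X, \cM)$, equipped with the natural action of $G := \Aut(\tilde{X}/X)$ and the augmentation $s_L: x \mapsto s_x$, is a topological augmented quandle whose associated operation matches $\rhd$; part (2) will then follow from the functoriality of Frobenius and the universal property of the inverse limit. The identification of operations is automatic from $s_x(y) = x \rhd y$, so what remains are the augmentation axioms $s_x(x) = x$ and $s_{gx} = g s_x g^{-1}$, continuity of the action and of $s_L$, equivariance of $\pi_{L,M}$, and the Frobenius compatibility $s_M \circ \pi_{L,M} = \pi'_{L,M} \circ s_L$.

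For the algebraic axioms I would invoke the uniqueness clause in the definition of Frobenius. The identity $s_x(x)=x$ is built into that definition. For the conjugation identity, the automorphism $g s_x g^{-1}$ fixes $gx$, and since $g$ preserves fibers over $X$ one has $q(gx) = q(x)$, so conjugation by $g$ translates the residue-field condition $f \equiv f^{q(x)} \pmod{\fm_x}$ into the analogous condition at $gx$; uniqueness then forces $g s_x g^{-1} = s_{gx}$. The same uniqueness argument supplies the Frobenius compatibility needed for (2): writing $\bar{x} := \pi_{L,M}(x)$, the restriction $s_x|_{X_M}$ fixes $\bar x$ and induces the $q(\bar x)$-th power map on $\bF(\bar x) \subseteq \bF(x)$, hence equals $s_{\bar x}$. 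Equivariance of $\pi_{L,M}$ is straightforward from the compatibility of the restriction map $G \to \Aut(X_M/X)$ with the actions on points.

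For the topology, when $[L:K]<\infty$ every space in sight is discrete and finite, so continuity is automatic. In general, by the preceding proposition $Q = \varprojlim_{M \in \cG(L/K)} Q(X_M/X, \cM)$, while $G = \varprojlim_{M \in \cG(L/K)} \Aut(X_M/X)$ is profinite; both are inverse limits of discrete factors. For each finite Galois intermediate $M$, the composite $G \times Q \to \Aut(X_M/X) \times Q(X_M/X, \cM) \to Q(X_M/X, \cM)$ is continuous because the target is discrete, and the universal property of the inverse limit upgrades this to continuity of $G \times Q \to Q$. Using the Frobenius compatibility just proved, the composite $Q \xrightarrow{s_L} G \to \Aut(X_M/X)$ factors as $Q \to Q(X_M/X, \cM) \xrightarrow{s_M} \Aut(X_M/X)$, a continuous map into a discrete space, so $s_L$ itself is continuous. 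The projections $\pi_{L,M}$ and $\pi'_{L,M}$ are continuous by construction, which closes out (2).

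The main obstacle is less a difficulty than a conceptual point: the conjugation identity $s_{gx} = g s_x g^{-1}$ and the projection compatibility $s_{\bar x} = s_x|_{X_M}$ are two manifestations of the same functoriality of Frobenius under its uniqueness characterization, so they are naturally handled in tandem. Once that functoriality is in place, the topological content reduces entirely to the formal observation that a map from an inverse limit into a discrete space that factors through a finite stage is continuous.
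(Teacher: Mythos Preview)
Your proposal is correct and follows essentially the same approach as the paper: both reduce to the finite Galois case where continuity is trivial, verify the augmentation axioms $s_x(x)=x$ and $s_{gx}=gs_xg^{-1}$ via the uniqueness characterization of the Frobenius, establish the compatibility $s_{\pi_{L,M}(x)}=s_x|_{X_M}$ by the same uniqueness argument, and then pass to the general case by taking the projective limit. Your phrasing of the topological step via the universal property of inverse limits is slightly more explicit than the paper's, but the content is identical.
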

\begin{proof}
We will prove the assertions for finite Galois extensions $L$ over $K$. 
Then we see assertion (1) in the general case by taking the projective limit. 
For (2), 
noting that $Q(X_M/X, \cM)=\varprojlim_{L'\in\cG(L/K)} Q(X_{L'\cap M}/X, \cM)$, 
we can deduce the assertion that $(\pi_{L, M}, \pi'_{L, M})$ 
is a homomorphism of topological augmented quandles 
from the case of finite extensions. 

So we assume that $L$ is a finite Galois extension of $K$. 
In this case all relevant spaces are endowed with discrete topology, 
so the continuity is obvious. 

For (1), 
by the definition of Frobenius automorphism, 
we have $s_x(x)=x$ for any $x\in Q(\tilde{X}/X, \cM)$. 

Let us show that $s_{g(x)}=g\circ s_x\circ g^{-1}$ holds 
for any $g\in \Aut(\tilde{X}/X)$ and $x\in Q(\tilde{X}/X, \cM)$. 
We set $x'=g(x)$ 
and prove that $g\circ s_x\circ g^{-1}$ 
satisfies the conditions required for $s_{x'}$. 
We have 
\begin{eqnarray*}
g\circ s_x\circ g^{-1}(x') & = & g(s_x(x)) \\
 & = & g(x) = x'. 
\end{eqnarray*}
For any $f\in \cO_{\tilde{X}, x'}$, 
we have 
$(g\circ s_x\circ g^{-1})^*f \in \cO_{\tilde{X}, x'}$ 
from the above, 
and the assumption on $s_x$ implies that 
\begin{eqnarray*}
(g\circ s_x\circ g^{-1})^*f & = & 
(g^*)^{-1}(s_x^*(g^*(f))) \\ 
& = & (g^*)^{-1}((g^*(f))^q+r),  
\end{eqnarray*}
where $r$ is an element of $\fm_x$ and $q=q(x)=q(x')$. 
The right hand side is equal to 
$f^q+(g^*)^{-1}(r)$, hence congruent to $f^q$ modulo $\fm_{x'}$. 
Thus $g\circ s_x\circ g^{-1}$ is the Frobenius automorphism associated to $x'$. 

For (2), 
we show that 
$F_{\pi_{L, M}(x)}=F_x|_M$ holds for any $x\in Q(\tilde{X}/X, \cM)$. 
Let $g$ be the automorphism of $X_M$ associated to $F_x|_M$. 
If $f\in\cO_{X_M, \pi_{L, M}(x)}$, 
then it can also be considered as an element of $\cO_{\tilde{X}, x}$ 
and therefore $F_x(f)$ is contained in $\cO_{\tilde{X}, x}\cap M=\cO_{X_M, \pi_{L, M}(x)}$. 
This shows that $g(\pi_{L, M}(x))=\pi_{L, M}(x)$. 

We have $F_x(f)-f^q\in\fm_x\cap \cO_{X_M, \pi_{L, M}(x)} = \fm_{\pi_{L, M}(x)}$, 
so $g$ is the Frobenius automorphism associated to $\pi_{L, M}(x)$ 
and we have $F_{\pi_{L, M}(x)}=F_x|_M$. 
This means $s_M\circ\pi_{L, M}=\pi'_{L, M}\circ s_L$. 
The actions are obviously compatible with the projection maps, 
so $(\pi_{L, M}, \pi'_{L, M})$ is a homomorphism of augmented quandles. 
\end{proof}

\begin{remark}\label{rem_knot_quandle}
If $K$ is a knot in $\bR^3$, 
its knot quandle $Q_K$ has the following description 
analogous to the arithmetic case. 

Let $\varphi: \tilde{X}\to \bR^3\setminus K$ be a universal cover, 
$U$ a tubular neighborhood of $K$ 
and $\Sigma$ the boundary of $U$. 
By Theorem 16.1 of \cite{Joyce1982}, 
the knot quandle $Q_K$ of $K$ can be identified with  
$\pi_0(\varphi^{-1}(\Sigma))$, 
the set of connected components of $\varphi^{-1}(\Sigma)$. 

The quandle operation is given in the following way. 
Let $\Sigma_1$ and $\Sigma_2$ be connected components of $\varphi^{-1}(\Sigma)$. 
Let $m_{\Sigma_1}$ be the deck transformation of $\Sigma_1\to\Sigma$ corresponding to a meridian 
(we don't have to care about the base point 
since the class of a meridian is in the center of $\pi_1(\Sigma)$, which is abelian). 
There is a unique lift of $m_{\Sigma_1}$ to a deck transformation of $\tilde{X}\to \bR^3\setminus K$, 
which we denote by $\tilde{m}_{\Sigma_1}$. 
Then $\Sigma_1\rhd \Sigma_2=\tilde{m}_{\Sigma_1}(\Sigma_2)$. 

If we stick to the analogy between loops and primes, 
$Q_K$ can be described as follows. 
Let $C$ be a meridian circle. 
Then we can define a quandle structure on $\pi_0(\varphi^{-1}(C))$. 
It is easy to prove that $Q_K$ is isomorphic to the quotient of $\pi_0(\varphi^{-1}(C))$
by the group generated by the class of a longitude. 
\end{remark}

\begin{proposition}\label{prop_galois_quandle_representation}
Write $Q=Q(\tilde{X}/X, \cM)$ and $G=\Aut(\tilde{X}/X)$ 
and let $\{q_a\}_{a\in\cM}$ be 
a complete system of representatives 
of $G\backslash Q$, which is naturally in bijection with $\cM$. 
Then $Q$ is naturally isomorphic to 
$Q(G, \{\overline{\langle s_{q_a}\rangle}\}, \{s_{q_a}\})$. 
\end{proposition}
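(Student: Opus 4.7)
The plan is to deduce the proposition as a direct application of Proposition \ref{prop_quandle_representation} to the topological augmented quandle $(Q, G, s_L)$ supplied by Proposition \ref{prop_arithmetic_quandle}(1). The only real content is to check the topological hypotheses of that proposition and to identify the stabilizer $\Stab_G(q_a)$ with $\overline{\langle s_{q_a}\rangle}$.

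First I would verify the three topological hypotheses. The group $G = \Aut(\tilde{X}/X) = \Gal(L/K)$ is a Galois group, hence profinite and in particular compact. The space $Q = \varprojlim_{M \in \cG(L/K)} Q(X_M/X, \cM)$ is a projective limit of discrete, hence Hausdorff, spaces, so $Q$ is Hausdorff. The orbits of $G$ on $Q$ coincide with the fibers of $\pi_{L, K}: Q \to \cM$, because $\Gal(L/K)$ acts transitively on the set of points of $X_L$ lying over a given point of $X$; each such fiber was already observed to be compact open, so every orbit is open. This in particular gives the natural bijection $G \backslash Q \cong \cM$ used to index the representatives $\{q_a\}$.

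Next I would identify $\Stab_G(q_a)$ with $\overline{\langle s_{q_a}\rangle}$. By Proposition \ref{prop_arithmetic_quandle}(2), for every finite Galois subextension $M$ of $L/K$ the element $s_{q_a} \in G$ projects to the Frobenius automorphism $F_{\pi_{L,M}(q_a)} \in \Gal(M/K)$. Since $\pi_{L,M}(q_a)$ is unramified over $a$, this Frobenius generates the (cyclic) decomposition group $\Stab_{\Gal(M/K)}(\pi_{L,M}(q_a))$. Passing to the inverse limit, $\Stab_G(q_a) = \varprojlim_M \Stab_{\Gal(M/K)}(\pi_{L,M}(q_a))$, and the image of $\langle s_{q_a}\rangle$ is dense in each finite layer, so $\overline{\langle s_{q_a}\rangle} = \Stab_G(q_a)$.

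With these identifications, Proposition \ref{prop_quandle_representation} immediately yields the desired isomorphism of topological quandles $Q(G, \{\overline{\langle s_{q_a}\rangle}\}, \{s_{q_a}\}) \cong Q$. I do not anticipate any substantive obstacle: the argument is essentially bookkeeping on top of the earlier propositions, and the only nontrivial input --- the description of unramified decomposition groups as topological closures of Frobenius cyclic subgroups --- is already built into the Galois-theoretic setup of Proposition \ref{prop_arithmetic_quandle}.
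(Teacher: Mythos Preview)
Your proposal is correct and follows exactly the same approach as the paper: apply Proposition~\ref{prop_quandle_representation} and identify $\Stab_G(q_a)$ with $\overline{\langle s_{q_a}\rangle}$. The paper's proof is a single sentence asserting precisely this, so you have simply filled in the topological hypotheses and the stabilizer computation that the paper leaves implicit.
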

\begin{proof}
This follows from 
Proposition \ref{prop_quandle_representation}, 
since the stabilizer of $q_a$ is $\overline{\langle s_{q_a}\rangle}$. 
\end{proof}

\begin{definition}
Let $X$ and $Y$ be topological spaces. 
We consider the set $C(X, Y)$ of continuous maps from $X$ to $Y$ 
with the compact-open topology. 
In other words, it has a subbasis consisting of the sets 
\[
O(C, U):=\{f\in C(X, Y) \mid f(C)\subseteq U\}
\]
for various compact subsets $C$ of $X$ and open subsets $U$ of $Y$. 
\end{definition}
If $X$, $Y$ and $Z$ are locally compact, 
it is known that $C(X, Y)$ is locally compact 
and that the evaluation map $C(X, Y)\times X\to Y$ 
and the composition map 
$C(Y, Z)\times C(X, Y)\to C(X, Z)$ are continuous. 

\begin{proposition}\label{prop_continuity_action_hom}
Let $Q$ be the quandle $Q(\tilde{X}/X, \cM)$. 
Then the natural map $\gamma: \Aut(\tilde{X}/X)\to C(Q, Q)$ is continuous. 
\end{proposition}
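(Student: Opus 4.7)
The plan is to reduce continuity of $\gamma$ to showing that $\gamma^{-1}(O(C,U))$ is open in the Krull topology on $\Aut(\tilde{X}/X)$ for every compact $C\subseteq Q$ and every open $U\subseteq Q$. I fix $g\in \Aut(\tilde{X}/X)$ with $g(C)\subseteq U$ and aim to produce $M\in\cG(L/K)$ such that every $h$ in the coset $g\cdot\Gal(L/M)$ still satisfies $h(C)\subseteq U$; this coset is open in $\Aut(\tilde{X}/X)$ and supplies the required neighborhood of $g$.

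The key tool is that $\pi_{L,M}: Q\to Q(X_M/X,\cM)$ is equivariant for the restriction homomorphism $\Aut(\tilde{X}/X)\to\Aut(X_M/X)$: in particular, whenever $h_1$ and $h_2$ agree on $M$, one has $\pi_{L,M}(h_1(x))=\pi_{L,M}(h_2(x))$ for every $x\in Q$. A second ingredient is that, since each $Q(X_M/X,\cM)$ is discrete, the sets $\pi_{L,M}^{-1}(\{v\})$, for varying $M\in\cG(L/K)$ and $v\in Q(X_M/X,\cM)$, form a clopen basis of the projective-limit topology on $Q$.

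To execute the argument, for each $x\in C$ I use openness of $U$ at $g(x)$ to pick $M_x\in\cG(L/K)$ so that $W_x:=\pi_{L,M_x}^{-1}(\{\pi_{L,M_x}(g(x))\})$ is contained in $U$. Because $g$ acts continuously on $Q$ by Proposition \ref{prop_arithmetic_quandle}, the preimages $g^{-1}(W_x)$ form an open cover of $C$; compactness extracts a finite subcover $g^{-1}(W_{x_1}),\dots,g^{-1}(W_{x_n})$. I then let $M$ be the compositum of $M_{x_1},\dots,M_{x_n}$, which still lies in $\cG(L/K)$. For any $h\in g\cdot\Gal(L/M)$ and any $y\in C$, choosing $i$ with $y\in g^{-1}(W_{x_i})$, the coincidence of $h$ and $g$ on $M_{x_i}$ together with the equivariance above gives $\pi_{L,M_{x_i}}(h(y))=\pi_{L,M_{x_i}}(g(y))=\pi_{L,M_{x_i}}(g(x_i))$, so $h(y)\in W_{x_i}\subseteq U$.

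Since the whole argument amounts to a standard compactness-plus-finite-compositum trick for continuous actions of a profinite group, I anticipate no serious obstacle. The one point requiring attention is the passage from pointwise continuity of a single automorphism to a uniform $M$ that controls the action on all of $C$ simultaneously, and this is precisely where compactness of $C$, combined with the clopen basis supplied by the projective-limit description of $Q$, does the essential work.
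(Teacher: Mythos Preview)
Your proof is correct. The paper, however, takes a much shorter route: it simply observes that the action $\Aut(\tilde{X}/X)\times Q\to Q$ is continuous (this is part of Proposition~\ref{prop_arithmetic_quandle}) and then invokes the standard exponential law for the compact-open topology---for locally compact spaces $X$, $Y$, $Z$, a map $X\to C(Y,Z)$ is continuous if and only if the associated map $X\times Y\to Z$ is. Your argument, by contrast, unwinds this general fact in the concrete profinite setting, working directly with the clopen basis supplied by the projective-limit description of $Q$ and the basic Krull neighborhoods $g\cdot\Gal(L/M)$. The paper's approach is more conceptual and one line long; yours is self-contained, avoids quoting the exponential law, and makes explicit how the profinite structure does the work---essentially you are reproving the currying adjunction in this particular case.
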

\begin{proof}
This follows from the continuity of the action 
and the following general fact: 
If $X, Y$ and $Z$ are locally compact spaces, 
a map $X\to C(Y, Z)$ is continuous 
if and only if the associated map $X\times Y\to Z$ is continuous. 
\end{proof}

\section{Reconstruction of arithmetic curves}

Now we restrict to the case of spectrums of integer rings 
and study the problem of reconstruction. 

Let $K$ be a number field, $\cO_K$ the ring of integers in $K$, 
and $S$ and $\cM$ disjoint sets of closed points of $\Spec\cO_K$ 
with $\# S<\infty$. 
We take $\Spec\cO_{K, S}=\Spec\cO_K\setminus S$ as $X$.

\subsection{Reconstruction of the Galois group}

Let $L$ be a Galois extension of $K$ which is unramified over $\cM$, 
and write $G$ for $\Aut(X_L/X)$ and 
$Q$ for $Q(X_L/X, \cM)$. 

\begin{proposition}
Assume that $\cM$ has density $1$. 

(1)
The natural map $\gamma: G\to C(Q, Q)$ is injective. 
Hence it is a homeomorphism onto its image. 

(2)
The closure of $\Inn(Q)$ in $C(Q, Q)$ is equal to $\gamma(G)$. 
\end{proposition}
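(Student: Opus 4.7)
The plan is to derive both parts from the Chebotarev density theorem, applied through the finite Galois subextensions of $L/K$. Continuity of $\gamma$ is already supplied by Proposition \ref{prop_continuity_action_hom}, so the real content is injectivity together with a density statement for Frobenius elements in the profinite group $G$.

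For injectivity, I will show that any $g \in G$ acting trivially on $Q$ lies in every open normal subgroup $N \trianglelefteq G$, hence equals the identity. The hypothesis means that $g$ lies in the decomposition group $D_{\tilde{\fl}} = \overline{\langle s_{\tilde{\fl}}\rangle}$ for every $\tilde{\fl} \in Q$; note that there is no inertia since $\tilde{X}/X$ is unramified over $\cM$. Fix such an $N$ and pass to the finite Galois quotient $G/N \cong \Gal(L^N/K)$. The Chebotarev density theorem together with the density-$1$ assumption on $\cM$ yields a prime $\fl \in \cM$ with $F_\fl \in N$, that is, splitting completely in $L^N/K$. For any such $\fl$ and any $\tilde{\fl}$ above it, $s_{\tilde{\fl}} \in N$, so $\overline{\langle s_{\tilde{\fl}}\rangle} \subseteq N$, forcing $g \in N$. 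Intersecting over all $N$ gives $g = 1$. The claim that $\gamma$ is a homeomorphism onto its image then follows formally: $\gamma$ is a continuous injection from the compact profinite group $G$ into the Hausdorff space $C(Q, Q)$.

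For the second assertion, $\gamma(G)$ is the continuous image of a compact space and is therefore closed in $C(Q, Q)$. Each quandle inner generator $s_x$ coincides with $\gamma$ applied to the Frobenius element $s_x \in G$, so $\Inn(Q) \subseteq \gamma(G)$ and hence $\overline{\Inn(Q)} \subseteq \gamma(G)$. For the reverse inclusion, using that $\gamma$ is a topological embedding, it suffices to prove that the closed subgroup $H := \overline{\langle s_x : x \in Q\rangle} \subseteq G$ equals $G$. The set $\{s_x : x \in Q\}$ exhausts all $G$-conjugates of $\{F_\fl : \fl \in \cM\}$, and by Chebotarev density with $\cM$ of density $1$, every conjugacy class of any finite quotient $G/N$ contains $F_\fl \bmod N$ for some $\fl \in \cM$. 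Hence the image of $H$ in $G/N$ exhausts $G/N$; passing to the inverse limit over $N$ yields $H = G$, completing the proof.

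The only subtle point is the correct profinite packaging of Chebotarev: every open normal $N \trianglelefteq G$ corresponds to a finite Galois subextension $L^N/K$ to which the classical density theorem applies, and density $1$ of $\cM$ ensures that every conjugacy class of $G/N$ is represented by the Frobenius of some prime in $\cM$. Once this reduction is made, no further obstacle remains.
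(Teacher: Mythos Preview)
Your proof is correct and follows essentially the same approach as the paper: both parts rest on the Chebotarev density theorem applied to finite Galois subextensions, combined with the standard compact-to-Hausdorff argument for the homeomorphism claim. The only cosmetic differences are that for injectivity you argue contrapositively via decomposition groups while the paper exhibits directly a point moved by $g$, and for part~(2) you phrase the density in terms of the closed subgroup $H$ surjecting onto each finite quotient whereas the paper shows directly that every basic neighborhood of any $g_0\in G$ contains some $s_x$; these are equivalent repackagings of the same Chebotarev input.
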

\begin{proof}
(1)
Let $g\not= id_{X_L}$ be an element of $G$. 
Then there exists an intermediate finite Galois extension $M$ 
such that the induced automorphism $g_M$ of $M$ is not the identity. 
By Density Theorem, 
there exists an element $a\in \cM$ which decomposes completely in $M$. 
Then $\pi_{M, K}^{-1}(a)$ is a torsor over $\Aut(X_M/X)$, 
so the action of $g_M$ on this fiber is not trivial. 
Therefore $\gamma(g)$ is not the identity element. 

The map $\gamma$ is a homeomorphism onto its image 
since it is continuous 
by Proposition \ref{prop_continuity_action_hom}, 
injective as we just saw,  
$G$ is compact 
and $C(Q, Q)$ is Hausdorff. 

(2)
By the definition of the quandle operation, 
$\Inn(Q)$ is a subset of $\gamma(G)$. 
Let $g_0$ be any element of $G$. 
Then any neighborhood $V$ of $g_0$ 
contains a subset of the form $g_0\cdot\Aut(X_L/X_M)$ 
with $M$ a finite Galois extension of $K$. 
This is the set of elements of $G$ which induce 
the same element of $\Aut(X_M/X)$ as $g_0$. 
By Density Theorem, 
$\{s_y \mid y\in Q(X_M/X, \cM)\}=\Aut(X_M/X)$ holds. 
Thus one can find $x\in Q$ such that $s_x\in V$. 

It follows that $\Inn(Q)$ is dense in $\gamma(G)$. 
The latter is closed since $\gamma$ is continuous, 
$G$ is compact 
and $C(Q, Q)$ is Hausdorff. 
\end{proof}
\begin{remark}
As can be seen from the proof, 
$\{s_x \mid x\in Q\}$ is already dense in $\gamma(G)$. 
It follows that 
$\gamma(G)$ is also equal to the closure of $\Trans(Q)$. 
\end{remark}

\begin{corollary}\label{cor_recover_group_and_primes}
Assume that $\cM$ has density $1$. 

(1)
As a topological group, 
$G=\Aut(X_L/X)$ can be recovered from the topological quandle 
$Q=Q(X_L/X, \cM)$ 
as the closure $\overline{\Inn(Q)}$ of $\Inn(Q)$ in $C(Q, Q)$. 

(2)
The following three objects are the same: 
(a) $\overline{\Inn(Q)}$-orbits, 
(b) closures of  $\Inn(Q)$-orbits, and 
(c) inverse images of elements of $\cM$. 
\end{corollary}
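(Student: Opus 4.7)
Part (1) is essentially a restatement of the preceding Proposition. That result established that $\gamma\colon G\to C(Q,Q)$ is injective and a homeomorphism onto its image (using compactness of $G$ and Hausdorffness of $C(Q,Q)$), and that $\overline{\Inn(Q)}=\gamma(G)$. Since composition in $C(Q,Q)$ restricts to the group law on $\gamma(G)$, transporting the group structure back along $\gamma$ identifies $\overline{\Inn(Q)}$, as a topological subgroup of $C(Q,Q)$ under composition, with $G$. Nothing further needs to be done for Part (1).

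For Part (2), I first identify the $G$-orbits in $Q$. Because $X_L/X$ is Galois, $G$ acts transitively on each fiber of the projection $\pi_{L,K}\colon Q\to\cM$, so the $G$-orbits are exactly the subsets $\pi_{L,K}^{-1}(a)$ for $a\in\cM$. Under the identification $\overline{\Inn(Q)}=\gamma(G)$ from Part (1), the $\overline{\Inn(Q)}$-orbits are the $G$-orbits, giving (a)$=$(c) immediately.

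It remains to prove (b)$=$(a). Each $\Inn(Q)$-orbit sits inside a $G$-orbit, and each $G$-orbit $\pi_{L,K}^{-1}(a)$ is compact (as observed just before the definition of $q(x)$), hence closed in the Hausdorff space $Q$; this yields $\overline{\Inn(Q)\cdot x}\subseteq G\cdot x$. For the reverse inclusion, combine the density of $\Inn(Q)$ in $\gamma(G)$ (from the preceding Proposition) with the joint continuity of the evaluation map $C(Q,Q)\times Q\to Q$, which is available because $Q$ is locally compact: any $g\in G$ is approximated by a net in $\Inn(Q)$, and evaluating at $x$ shows that $\Inn(Q)\cdot x$ is dense in $G\cdot x$. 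Taking closures finishes the proof. There is no real obstacle here, since the heavy lifting—both the identification $\overline{\Inn(Q)}=\gamma(G)$ and the Chebotarev-type use of the density hypothesis on $\cM$—was carried out in the preceding Proposition; the only points requiring care are the closedness of the fibers and the transfer of density through evaluation, both of which follow from the topological hypotheses already in place.
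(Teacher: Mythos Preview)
Your proof is correct and follows essentially the same approach as the paper. The paper proves (a)$=$(b) first via compactness of $\overline{\Inn(Q)}\cong G$ and continuity of the action, then shows (b)$=$(c) by observing fibers are closed and using the projective-limit description to show any $y$ in a fiber lies in $\overline{G\cdot x}$; you instead establish (a)$=$(c) directly by citing transitivity of the Galois action on fibers (which the paper's limit argument effectively re-derives) and then prove (a)$=$(b) with the same compactness-plus-density-plus-evaluation reasoning. The ingredients and logic are the same, only the order and the level at which transitivity on fibers is taken as known differ.
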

\begin{proof}
(1) 
This is an immediate consequence of the proposition. 

(2)
Let $x$ be an element of $Q$. 
Since the action is continuous, 
we have $\overline{\Inn(Q)}\cdot x\subseteq \overline{\Inn(Q)\cdot x}$. 
The set 
$\overline{\Inn(Q)}\cdot x$ contains $\Inn(Q)\cdot x$ 
and is closed since $\overline{\Inn(Q)}\cong \Aut(X_L/X)$ is compact. 
Thus $\overline{\Inn(Q)}\cdot x\supseteq \overline{\Inn(Q)\cdot x}$ holds. 

Let us write $\pi$ for $\pi_{L, K}$. 
The inclusion 
$\Inn(Q)\cdot x\subseteq \pi^{-1}(\pi(x))$ is obvious, 
and since $\cM$ is given the discrete topology, 
$\pi^{-1}(\pi(x))$ is closed and 
$\overline{\Inn(Q)\cdot x}\subseteq \pi^{-1}(\pi(x))$ holds. 
Let $y$ be an element of $\pi^{-1}(\pi(x))$. 
Then, for any finite Galois intermediate field $M$, 
the point $\pi_{L, M}(y)$ is in the $G$-orbit of $\pi_{L, M}(x)$. 
Thus $y$ is in the closure of $G\cdot x$, which is equal to $\overline{\Inn(Q)\cdot x}$. 
\end{proof}
\begin{remark}
We see from (2)  that $\cM$ as a set and the map $Q\to \cM$ can be recovered from $Q$. 
Note that at this stage we do not know 
when and how the embedding of $\cM$ into $\Spec\cO_K$ can be recovered from $Q$. 
\end{remark}

\begin{corollary}\label{cor_recover_abelian}
Assume that $\cM$ has density $1$ 
and let $K^{\mathrm{ab}}$ denote the 
maximal abelian extension of $K$. 
Then the topological quandle 
$Q(X_{L\cap K^{\mathrm{ab}}}/X, \cM)$ 
can be described in terms of $Q$ as 
$Q/\overline{[\Inn(Q), \Inn(Q)]}$. 

In particular, 
$Q^{\mathrm{ab}}(X, \cM)$ can be described in terms of $Q(X, \cM)$. 
\end{corollary}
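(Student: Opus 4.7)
The plan is to identify $Q(X_{L\cap K^{\ab}}/X, \cM)$ with the orbit space of $Q$ under the action of the closed commutator subgroup of $G=\Aut(X_L/X)$, and then use Corollary~\ref{cor_recover_group_and_primes} to translate this into an intrinsic description in terms of $\Inn(Q)$.

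First, Galois theory gives $\Gal(L/L\cap K^{\ab})=\overline{[G, G]}$, hence the projection $X_L\to X_{L\cap K^{\ab}}$ induces a continuous surjective homomorphism of topological augmented quandles $(\pi, \pi'):(Q, G)\to (Q(X_{L\cap K^{\ab}}/X, \cM), G/\overline{[G,G]})$ by Proposition~\ref{prop_arithmetic_quandle}(2), whose fibers on $Q$ are precisely the $\overline{[G, G]}$-orbits. I would next argue that $\pi$ is a topological quotient: since $\cM$ carries the discrete topology, $Q$ decomposes as the topological disjoint union of the compact fibers $F_a=\pi_{L, K}^{-1}(a)$ for $a\in\cM$, and $\pi$ preserves this decomposition. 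On each $F_a$ the compact group $\overline{[G, G]}$ acts, so the quotient $F_a/\overline{[G, G]}$ is compact, and the induced continuous bijection from $F_a/\overline{[G, G]}$ onto the corresponding fiber of $Q(X_{L\cap K^{\ab}}/X, \cM)$ (which is Hausdorff) is a homeomorphism. Assembling over $a\in\cM$ shows that $Q/\overline{[G, G]}\to Q(X_{L\cap K^{\ab}}/X, \cM)$ is a homeomorphism, and compatibility with the quandle operation is automatic because $\pi$ is an augmented quandle homomorphism.

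It remains to replace $\overline{[G, G]}$ by the intrinsic subgroup $\overline{[\Inn(Q), \Inn(Q)]}$ of $\overline{\Inn(Q)}$. Under the isomorphism $\gamma: G\xrightarrow{\sim}\overline{\Inn(Q)}$ of Corollary~\ref{cor_recover_group_and_primes}(1), we have $\gamma(\overline{[G, G]})=\overline{[\gamma(G), \gamma(G)]}=\overline{[\overline{\Inn(Q)}, \overline{\Inn(Q)}]}$, so the point is to check
\[
\overline{[\overline{\Inn(Q)}, \overline{\Inn(Q)}]} = \overline{[\Inn(Q), \Inn(Q)]}.
\]
The inclusion $\supseteq$ is clear. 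For $\subseteq$, given $g_1, g_2\in\overline{\Inn(Q)}$, choose nets $h_{1,\alpha}, h_{2,\alpha}\in\Inn(Q)$ converging to $g_1, g_2$ respectively; by joint continuity of the commutator map on the topological group $\overline{\Inn(Q)}$, the commutators $[h_{1,\alpha}, h_{2,\alpha}]\in[\Inn(Q), \Inn(Q)]$ converge to $[g_1, g_2]$, proving the claim.

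Combining the previous two paragraphs gives $Q/\overline{[\Inn(Q), \Inn(Q)]}\cong Q(X_{L\cap K^{\ab}}/X, \cM)$ as topological quandles. The last sentence follows by specializing $L=K^{\ur}$, since $X^{\ur}\cap K^{\ab}=X^{\ur, \ab}$. The only place where some care is needed is the verification that $\pi$ is a topological quotient map; once one uses the fiberwise decomposition and the compactness of $\overline{[G, G]}$, this step becomes standard, so I do not expect a serious obstacle.
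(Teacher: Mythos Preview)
Your proof is correct and follows the same approach as the paper: identify $\overline{[G,G]}$ with $\overline{[\Inn(Q),\Inn(Q)]}$ via the isomorphism $G\cong\overline{\Inn(Q)}$ from Corollary~\ref{cor_recover_group_and_primes}, so that the quotient by this normal subgroup recovers the abelianized cover. The paper's own proof is the one-line identity $\overline{[G,G]}=\overline{[\overline{\Inn(Q)},\overline{\Inn(Q)}]}=\overline{[\Inn(Q),\Inn(Q)]}$; you have simply spelled out the surrounding details (the topological quotient argument and the density/continuity step), and there is a harmless slip in the last line where you write $X^{\mathrm{ur}}\cap K^{\ab}$ for what should be $K^{\mathrm{ur}}\cap K^{\ab}$.
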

\begin{proof}
We have $\overline{[G, G]}=\overline{[\overline{\Inn(Q)}, \overline{\Inn(Q)}]}
= \overline{[\Inn(Q), \Inn(Q)]}$, and hence the assertion follows. 
\end{proof}

\subsection{The case $S=\{\fp\}$; $p$-adic logarithms and transcendency results}
Let us study the problem of reconstruction from $Q^{\mathrm{ab}}(X, \cM)$ 
in the special case where $S$ consists of one point $\fp$. 

As in the case of reconstruction of curves 
from the arithmetic fundamental groups, 
it seems reasonable to require that 
the fundamental group be sufficiently big. 

Let $Q$ and $G$ denote $Q^{\mathrm{ab}}(X, \cM)$ and 
$\pi_1^{\mathrm{ab}}(X)=\Aut(X^{\mathrm{ur}, \mathrm{ab}}/X)$, respectively. 

\begin{proposition}\label{prop_recover_p}
If $\Inn(Q)$ is infinite, 
then $\Char \bF(\fp)$ is characterized as 
the unique prime number $p$ such that 
the $p$-part of the profinite group $\overline{\Inn(Q)}$ is infinite. 

In particular, if $\cM$ has density $1$ and $G$ is infinite, 
then $p=\Char \bF(\fp)$ and the rank of the $p$-part of $G$ 
can be recovered from $Q$. 
\end{proposition}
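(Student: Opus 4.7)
The plan is to use class field theory to describe $G = \pi_1^{\ab}(X)$ up to finite groups in terms of the local units $\cO_{K_\fp}^\times$, and then transfer this structural information to $\overline{\Inn(Q)}$ via the natural map $\gamma$. The key fact to extract is that $G$ is an abelian profinite group all of whose infinite behavior lives in its pro-$p$ part, where $p = \Char \bF(\fp)$.

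First I would invoke class field theory for $X = \Spec \cO_K \setminus \{\fp\}$: the local reciprocity map gives a continuous homomorphism $\cO_{K_\fp}^\times \to G$ whose image is the inertia at $\fp$ and whose cokernel is a narrow-class-group-type quotient of $K$, hence finite. Combined with the standard local decomposition $\cO_{K_\fp}^\times \cong \mu(K_\fp) \times \bZ_p^{[K_\fp : \bQ_p]}$ (with $\mu(K_\fp)$ finite), this shows that $G$, viewed as an abelian profinite group, has finite pro-$\ell$ part $G_\ell$ for every $\ell \neq p$, and only finitely many primes $\ell$ contribute a nontrivial $G_\ell$ in the decomposition $G = \prod_\ell G_\ell$.

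Next I would transfer this to $\overline{\Inn(Q)}$. Since $\gamma : G \to C(Q, Q)$ is a continuous homomorphism from a compact abelian group, $\gamma(G)$ is a compact abelian topological group and hence a continuous quotient of $G$, so it inherits the properties above. The closure $\overline{\Inn(Q)}$ sits inside $\gamma(G)$ as a closed subgroup, and its pro-$\ell$ decomposition also has all factors finite for $\ell \neq p$ and all but finitely many trivial. Consequently, if $\Inn(Q)$ is infinite then so is $\overline{\Inn(Q)}$, and the only way this can happen is for $\overline{\Inn(Q)}_p$ to be infinite, giving the unique characterization of $p = \Char \bF(\fp)$.

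For the second statement, under the density $1$ hypothesis Corollary \ref{cor_recover_group_and_primes} identifies $G$ with $\overline{\Inn(Q)}$ as topological groups, so the first part applied in this setting yields $p = \Char \bF(\fp)$ immediately, and the rank of the pro-$p$ part is a topological invariant of $G$ (e.g.\ $\dim_{\bQ_p}(\bQ_p \otimes_{\bZ_p} G_p)$) recoverable from $Q$. The main input—really the only nontrivial step—is the class-field-theoretic description of $G$ modulo a finite group via $\cO_{K_\fp}^\times$, which is what forces all infinite behavior of $G$ into the pro-$p$ part; the remainder is bookkeeping with abelian profinite groups. The subtlety worth watching is that the first statement does not assume density $1$, so $\gamma$ need not be injective; however, the structural property ``finite pro-$\ell$ parts for $\ell \neq p$ with finite support'' is preserved both under taking continuous quotients and under taking closed subgroups, so the argument goes through without modification.
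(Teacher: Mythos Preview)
Your proposal is correct and follows essentially the same route as the paper: use class field theory to exhibit $G$ as an extension of a finite group by a quotient of $\cO_{K_\fp}^\times$, deduce that only the $p$-part of $G$ can be infinite, and then pass this structural fact down to $\overline{\Inn(Q)}$ via the continuous map $\gamma$ (as a closed subgroup of a quotient of $G$), invoking Corollary~\ref{cor_recover_group_and_primes} for the density~$1$ case. The paper phrases the class-field-theory input as the exact sequence $1\to \cO_{K_\fp}^\times/\overline{\cO_K^{\times,+}}\to G\to \Clp(K)\to 1$, but this is the same information you use.
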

\begin{proof}
By Class Field Theory, there is an exact sequence 
\[
1\to \cO_{K_\fp}^\times/\overline{\cO_K^{\times, +}} \to G\to \Clp(K)\to 1, 
\]
where $K_\fp$ is the completion of $K$ at $\fp$, 
$\cO_{K_\fp}$ its ring of integers, 
$\cO_K^{\times, +}$ is the group of totally positive units in $K$, 
$\overline{\cO_K^{\times, +}}$ is its closure in $\cO_{K_\fp}$ 
and $\Clp(K)$ is the narrow class group of $K$. 
Thus $\Char \bF(\fp)$ is the unique prime number $p$ 
such that the $p$-part of $G$ is infinite. 

By Proposition \ref{prop_continuity_action_hom}, 
$\overline{\Inn(Q)}$ is a closed subgroup of a quotient of $G$. 
So its $p$-part is finite if $p\not=\Char \bF(\fp)$, 
and infinite if $p=\Char \bF(\fp)$ and $\Inn(Q)$ is infinite. 

If $\cM$ has density $1$, then $G\cong\overline{\Inn(Q)}$ 
and the second assertion follows. 
\end{proof}

By the exact sequence above, 
the abelianized fundamental group $G$ can be 
linearized up to torsions 
using the $p$-adic logarithm function. 
In the proof of our main theorem, 
a key role is played 
by certain results on independence of $p$-adic logarithms 
of algebraic numbers. 
Let us collect the results from $p$-adic transcendental number theory 
that will be needed. 

Let $k$ be a non-archimedean local field of characteristic $0$. 
The power series $f(z):=\sum_{n=1}^\infty (-1)^{n-1}(z-1)^n/n$ 
converges on a neighborhood of $1$ in $k$. 
It can be extended to a function $\ln: \cO_k^\times\to k$ 
by defining $\ln(z)$ to be $f(z^N)/N$ for a sufficiently divisible integer $N$. 
It has a local inverse map near $0$ given by the power series 
$\exp(z):=\sum_{n=0}^\infty z^n/n!$.

\begin{theorem}[$p$-adic Six Exponentials Theorem, Th\'eor\`eme 1 of \cite{Serre1966}]
\label{thm_six_exponentials}
Let $k$ be a non-archimedean local field of characteristic $0$. 
For positive integers $d$ and $l$ satisfying 
$d>l/(l-1)$ (i.e. $dl>d+l$), 
let $x_1, \dots, x_d$ be elements of $k$ 
and $Y$ a free abelian subgroup of $k$ of rank $l$. 

If $\exp(x_iy)$ converges and is algebraic over $\bQ$ for any $i=1, \dots, d$ and $y\in Y$, 
then $x_1, \dots, x_d$ are linearly dependent over $\bQ$. 
\end{theorem}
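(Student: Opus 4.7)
The plan is to adapt the classical Schneider--Lang auxiliary function method to the $p$-adic setting, following Serre's original strategy. I argue by contradiction: suppose $x_1,\dots,x_d$ are $\bQ$-linearly independent. Pick a $\bZ$-basis $y_1,\dots,y_l$ of $Y$, set $\alpha_{ij}=\exp(x_iy_j)$, and let $K\subset\overline{\bQ}$ be a number field containing all the $\alpha_{ij}$, equipped with a fixed embedding into $k$.

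The first step is the construction of an auxiliary function. Choose large integers $L$ and $N$ whose ratio $\log L/\log N$ is close to $l/d$; the hypothesis $dl>d+l$ is exactly what allows this ratio to satisfy $L\ll N$ while still $(L+1)^d\gg N^l$. A $p$-adic Siegel lemma over $K$ then produces a nonzero polynomial
\[
P(T_1,\dots,T_d)=\sum_{0\le k_1,\dots,k_d\le L}c_{\mathbf{k}}\,T_1^{k_1}\cdots T_d^{k_d}\in\mathcal{O}_K[T_1,\dots,T_d]
\]
of controlled height such that
\[
F(z):=P(\exp(x_1z),\dots,\exp(x_dz))
\]
vanishes at every $y=\sum_jn_jy_j$ with $0\le n_j<N$. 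Each vanishing condition is a single linear equation over $K$ in the $(L+1)^d$ unknowns $c_{\mathbf{k}}$, for a total of $N^l$ equations; the inequality $dl>d+l$ gives Siegel's lemma enough slack to produce such a $P$ with small coefficients. The $\bQ$-linear independence of the $x_i$ guarantees that the monomials $\exp(\langle\mathbf{k},\mathbf{x}\rangle z)$ are pairwise distinct, hence $F\not\equiv 0$.

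Next comes the extrapolation step. For a lattice point $y_\ast\in Y$ just outside the initial box, $F(y_\ast)$ lies in $K$, and admits two competing estimates. A $p$-adic Schwarz lemma, applied to $F$ on a disk containing $y_\ast$ and the $N^l$ already-produced zeros, yields an upper bound of the shape
\[
|F(y_\ast)|_p\le\Bigl(\tfrac{r}{R}\Bigr)^{N^l}\|F\|_R,
\]
while a Liouville-type inequality, coming from the product formula over all places of $K$, yields a matching lower bound on $|F(y_\ast)|_p$ whenever $F(y_\ast)\ne 0$. In the parameter regime opened up by $dl>d+l$, the upper bound wins, forcing $F(y_\ast)=0$. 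Iterating the argument, $F$ vanishes on all of $Y$; a growth/Jensen-type estimate for nonzero exponential polynomials in the $p$-adic setting (using again the $\bQ$-linear independence of the $x_i$ to see that the exponents are distinct) then yields the required contradiction, and therefore $x_1,\dots,x_d$ must be $\bQ$-linearly dependent.

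The main obstacle is the quantitative bookkeeping: Siegel's height bounds, the $p$-adic Schwarz estimate, and the Liouville denominators all have to cohere under the asymptotic choice of $L$ and $N$. The hypothesis $d>l/(l-1)$ is precisely the numerical threshold at which these three estimates can be made to cooperate. An additional $p$-adic subtlety is that $\exp$ only converges on a disk of positive radius in $k$, so the Schwarz lemma must be applied on a finite disk whose radius is carefully coupled to the region on which $\exp(x_iz)$ is defined at the points of $Y$; ensuring that the extrapolation stays inside this disk is where most of the technical effort goes.
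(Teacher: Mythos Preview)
The paper does not prove this theorem at all: it is quoted verbatim as Th\'eor\`eme~1 of \cite{Serre1966} and then used as a black box (via Corollary~\ref{cor_six_exponentials}) in the reconstruction arguments of Section~4. So there is no ``paper's own proof'' to compare against.

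That said, your outline is essentially Serre's original argument, and as a sketch it is broadly correct: auxiliary polynomial via Siegel's lemma, $p$-adic Schwarz lemma to propagate zeros, Liouville lower bound from the product formula, and the numerical condition $dl>d+l$ as the threshold where these three estimates can be balanced. One small correction: your closing remark that ``ensuring that the extrapolation stays inside this disk is where most of the technical effort goes'' overstates the difficulty in the $p$-adic setting. Because the valuation is ultrametric, once $\exp(x_iy_j)$ converges for a $\bZ$-basis $y_1,\dots,y_l$ of $Y$, it automatically converges for every $y=\sum n_jy_j\in Y$, since $|x_i\sum n_jy_j|_p\le\max_j|x_iy_j|_p$. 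So the domain issue is actually \emph{simpler} than in the archimedean Schneider--Lang theorem, not harder; the genuine work lies in the height and Schwarz bookkeeping you mention earlier.
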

This theorem is often used in the following form, hence the name. 
\begin{corollary}\label{cor_six_exponentials}
For $1\leq i\leq 2$ and $1\leq j\leq 3$, 
let $\alpha_{ij}\in\cO_k^\times$ be algebraic over $\bQ$. 
If the rows and columns of the matrix $M:=(\ln\alpha_{ij})$ 
are linearly independent over $\bQ$, 
then $M$ is of rank $2$ (as a matrix over $k$). 
\end{corollary}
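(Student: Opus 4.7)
The plan is to argue by contradiction: assume the rank of $M$ over $k$ is at most $1$. The $\bQ$-linear independence of the columns implies $M \neq 0$, so the rank is exactly $1$, and the two rows are $k$-proportional. Thus there exists $c \in k$ with $\ln \alpha_{2j} = c \ln \alpha_{1j}$ for $j=1,2,3$; and $c \neq 0$, since otherwise the second row would vanish and violate the $\bQ$-linear independence of the rows.

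Two further observations, both immediate from the hypotheses, will make the Six Exponentials Theorem applicable. First, $c \notin \bQ$: any rational equality $c = a/b$ with $b \neq 0$ would give $b \cdot \mathrm{row}_2 - a \cdot \mathrm{row}_1 = 0$, a nontrivial $\bQ$-linear relation among the rows. Second, the entries $\ln \alpha_{11}, \ln \alpha_{12}, \ln \alpha_{13}$ of the first row are themselves $\bQ$-linearly independent: a relation $\sum_j n_j \ln \alpha_{1j} = 0$ would imply $\sum_j n_j \ln \alpha_{2j} = c \cdot 0 = 0$ as well, hence $\sum_j n_j \cdot \mathrm{col}_j = 0$, contradicting the $\bQ$-independence of the columns.

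I then invoke Theorem \ref{thm_six_exponentials} with $d = 2$ and $l = 3$, which satisfies $d > l/(l-1) = 3/2$. Set $x_1 = 1$ and $x_2 = c$, and let $Y$ be the subgroup of $k$ generated by $p^N \ln \alpha_{1j}$ for $j = 1, 2, 3$, where $N$ is chosen large enough that both $p^N \ln \alpha_{1j}$ and $c \cdot p^N \ln \alpha_{1j} = p^N \ln \alpha_{2j}$ lie in the domain of convergence of $\exp$ for each $j$. By the $\bQ$-independence established in the previous paragraph, $Y$ is free abelian of rank $3$. For $y = \sum_j m_j p^N \ln \alpha_{1j} \in Y$, the values $\exp(x_1 y) = \prod_j \alpha_{1j}^{m_j p^N}$ and $\exp(x_2 y) = \exp\bigl(\sum_j m_j p^N \ln \alpha_{2j}\bigr) = \prod_j \alpha_{2j}^{m_j p^N}$ are algebraic over $\bQ$. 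The theorem then forces $1$ and $c$ to be $\bQ$-linearly dependent, i.e.\ $c \in \bQ$, contradicting what was just established. The only real subtlety in this plan is the convergence bookkeeping for $\exp$ (handled by choosing $N$ large); everything else reduces to a direct application of Six Exponentials in the exponents $d=2$, $l=3$.
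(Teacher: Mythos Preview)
Your argument is correct and follows essentially the same route as the paper: assume rank $1$, factor $\ln\alpha_{ij}=x_i y_j$ (you take $x_1=1$, $x_2=c$, $y_j=p^N\ln\alpha_{1j}$), and apply the Six Exponentials Theorem with $d=2$, $l=3$ to force a $\bQ$-dependence between $x_1,x_2$ that contradicts the row hypothesis. Your treatment is in fact more careful than the paper's about the convergence of $\exp$; the only minor imprecision is that $\exp(p^N\ln\alpha)$ may differ from $\alpha^{p^N}$ by a root of unity, but this does not affect algebraicity and hence does not affect the argument.
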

\begin{proof}
Assume that $M$ is of rank $1$. 
Then one can find $x_i, y_j\in k$ such that $\ln \alpha_{ij}=x_iy_j$. 
The rank of $Y:=\langle y_1, y_2, y_3\rangle$ is $3$ 
by the assumption that the columns of $M$ are linearly independent 
over $\bQ$. 
The Six Exponentials Theorem implies that $x_1$ and $x_2$ are 
linearly dependent over $\bQ$, 
but this contradicts the assumption 
that the rows of $M$ are linearly independent over $\bQ$. 
\end{proof}

The Six Exponentials Theorem was generalized in the following form. 
\begin{theorem}[Th\'eor\`eme 2.1.p of \cite{Waldschmidt1981}]
\label{thm_independence}
Let $k$ be a non-archimedean local field of characteristic $0$. 
For positive integers $d$ and $l$, 
let $\alpha_{ij}\in\cO_k^\times$ be algebraic over $\bQ$ 
for $1\leq i\leq d$ and $1\leq j\leq l$. 

Then the rank $r$ of $M:=(\ln\alpha_{ij})$ satisfies 
$r\geq d\cdot\theta(M)/(1+\theta(M))$, 
where $\theta(M)$ is defined in \cite[\S7]{Waldschmidt1981}.

(See \cite[Th\'eor\`eme 1.1.p]{Waldschmidt1981} 
for a formulation similar to Theorem \ref{thm_six_exponentials}.) 
\end{theorem}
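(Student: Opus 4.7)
The plan is to follow the standard strategy of $p$-adic transcendence theory as carried out in \cite{Waldschmidt1981}. Assume for contradiction that $r<d\theta(M)/(1+\theta(M))$. Since $r$ is the rank of $M$, after a change of basis in the column space one can factor
\[
\ln\alpha_{ij}=\sum_{\nu=1}^{r} x_{i\nu}y_{\nu j}
\]
with $x_{i\nu},y_{\nu j}\in k$. The $d$ exponential functions $z=(z_1,\dots,z_r)\mapsto \exp(\sum_\nu x_{i\nu}z_\nu)$ then take the algebraic values $\alpha_{ij}$ at the $l$ points $(y_{1j},\dots,y_{rj})\in k^{r}$, reducing the problem to a multivariable analogue of the Six Exponentials setup.

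The next step is the auxiliary-function construction. Using Siegel's lemma over a number field containing all the $\alpha_{ij}$, one produces a nonzero polynomial $P(X_1,\dots,X_d)$ of controlled multidegree and height such that
\[
F(z_1,\dots,z_r):=P\!\left(\exp\!\Bigl(\sum_{\nu} x_{1\nu}z_\nu\Bigr),\dots,\exp\!\Bigl(\sum_{\nu} x_{d\nu}z_\nu\Bigr)\right)
\]
vanishes to large order along the additive semigroup generated by the column vectors $(y_{1j},\dots,y_{rj})$. The balance between unknowns (coefficients of $P$) and equations (imposed vanishings) is exactly what gives rise to the combinatorial quantity $\theta(M)$.

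One then applies the $p$-adic Schwarz lemma (in the form due to Adams--Robba): an entire $p$-adic function with many zeros on a small polydisc is uniformly small on a larger polydisc. This forces $F$ to be extremely small $p$-adically at new lattice points of the semigroup. However, at such points $F$ takes values that are algebraic of bounded denominator and height, so Liouville's inequality either forces outright vanishing or contradicts the size estimate. Iterating this \emph{Schwarz + Liouville} extrapolation---or applying a zero estimate in the style of Philippon---produces more vanishings than the degree of $P$ permits, contradicting the assumed bound on~$r$.

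The hard part is the bookkeeping: matching the parameter choices so that the \emph{optimal} exponent $\theta(M)/(1+\theta(M))$ emerges rather than some weaker constant. In the Six Exponentials specialization $d=2$, $l=3$ this collapses to the clean combinatorial inequality $dl>d+l$ already appearing in Corollary~\ref{cor_six_exponentials}; the general refinement requires the structured combinatorial analysis of rows and columns of $M$ that Waldschmidt's function $\theta$ encodes. Since this theorem is invoked only as a black box for the main reconstruction argument, the present paper would in practice import the statement from \cite{Waldschmidt1981} rather than reprove it.
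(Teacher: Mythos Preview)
Your final sentence is exactly right: the paper does not prove this theorem at all. It is stated purely as a citation of Th\'eor\`eme~2.1.p of \cite{Waldschmidt1981}, with no proof or sketch given; the only thing added is the remark that $\theta(M)=l/d$ under the nondegeneracy hypothesis ${}^t\!\boldsymbol{x}M\boldsymbol{y}\neq 0$, which is all that is used downstream.

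Your outline of the underlying auxiliary-function/Schwarz--Liouville argument is a faithful high-level summary of Waldschmidt's method, but it goes beyond what the paper does. For the purposes of matching the paper, the correct ``proof'' here is simply to cite \cite{Waldschmidt1981} and move on, as you yourself anticipate.
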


We do not write down the definition of $\theta(M)$ here, 
but we remark that 
it is equal to $l/d$ if the following holds: 
For any nonzero vectors 
$\boldsymbol{x}\in\bZ^d$ and $\boldsymbol{y}\in\bZ^l$, 
${}^t\!\boldsymbol{x}M\boldsymbol{y}\not=0$. 
In this case the theorem states 
that $\mathrm{rank} M\geq dl/(d+l)$.

\subsection{Rational and quadratic fields}

Now we furthermore assume that 
$K$ is either $\bQ$ or a quadratic field. 
We denote the characteristic of $\bF(\fp)$ by $p$ 
and the rank of the $p$-part of $G$ by $R$. 
For a prime ideal $\fl$ of $K$, 
write $e(\fl)$ and $f(\fl)$ for the absolute ramification index and 
the degree of extension of $\bF(\fl)$ over the prime field, 
respectively. 
There are $5$ cases. 

\begin{itemize}
\item[(1)]
If $K=\bQ$, then we have 
$G\cong \bZ_p^\times$, so $R=1$. 
\item[(2)]
Let $K$ be a quadratic field. 
\begin{itemize}
\item[(2-0)]
If $K$ is real and $e(\fp)=f(\fp)=1$, then $G$ is finite, 
so we will not consider this case. 
\item[(2-1)]
If $K$ is real and $(e(\fp), f(\fp))=(1, 2)$ or $(2, 1)$, 
then the unit group has rank $1$ and $R=1$. 
\item[(2-2)]
If $K$ is complex and $e(\fp)=f(\fp)=1$, 
we again have $R=1$. 
\item[(2-3)]
If $K$ is complex and $(e(\fp), f(\fp))=(1, 2)$ or $(2, 1)$, 
we have $R=2$. 
\end{itemize}
\end{itemize}

\begin{example}
Let us give an explicit description of the quandle 
$Q=Q^{\mathrm{ab}}(\Spec\bZ\setminus\{(p)\}, \cM)$  
using Proposition \ref{prop_galois_quandle_representation}. 
We identify the maximal ideals of $\bZ$ with their positive generators, 
i.e. prime numbers. 
The fiber of $\pi: Q\to \cM$ over $l\in \cM$ can be 
identified with $\bZ_p^\times/\overline{\langle l \rangle}$ 
under a choice of a point in $\pi^{-1}(l)$, 
so we have $Q=\coprod_{l\in \cM} \bZ_p^\times/\overline{\langle l \rangle}$. 
Note that each fiber of $\pi$ is a finite set. 
(The same is true in the cases (2-0), (2-1) and (2-2). 
In the case (2-3), each fiber of $\pi$ is an infinite set.)

The quandle operation is given by 
$x\rhd y=\pi(x)y$, 
where $\pi(x)$ is regarded as a rational prime 
and acts on each $\bZ_p^\times/\overline{\langle l \rangle}$ 
by multiplication. 

We can visualize the quandle $Q$ as a kind of slot machine: 
A pair of a reel and a button is associated to each $l\in \cM$. 
When a button is pressed, 
reels rotate by certain angles. 
(Strictly speaking, 
$\bZ_p^\times/\overline{\langle l \rangle}$ can be non-cyclic when $p=2$ 
and the word ``rotation'' might be inappropriate.) 
\end{example}

Now we state our main theorem.

\begin{theorem}\label{thm_recover}
Let each of $K$ and $K'$ be either $\bQ$ or a quadratic field, 
$\fp$ and $\fp'$ primes in $K$ and $K'$ respectively, 
and $\cM$ and $\cM'$ sets of primes with density $1$ in $K$ and $K'$, 
respectively. 

Write $X=\Spec\cO_K\setminus\{\fp\}$ 
and $Q=Q^{\mathrm{ab}}(X, \cM)$, 
with the natural maps $\varpi: Q\to X$ 
and $\rho: Q\to\Spec\bZ$. 
Define $X', Q', \varpi'$ and $\rho'$ in the same way. 
Assume that $\pi_1^{\mathrm{ab}}(X)$ or $\pi_1^{\mathrm{ab}}(X')$ is infinite. 

If $\varphi: Q\to Q'$ is an isomorphism of topological quandles, 
then there exists an isomorphism $\sigma: K\to K'$ 
which maps $\fp$ to $\fp'$, 
and $\varphi$ commutes with $\rho$ and $\rho'$. 

Furthermore, 
except in the case where $K$ is real quadratic, 
$\sigma$ can be chosen 
so that $\varphi$ is compatible with the isomorphism 
$X\to X'$ induced by $\sigma$. 
\end{theorem}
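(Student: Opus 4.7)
The plan is to reduce the quandle isomorphism to a linear-algebraic relation between $p$-adic logarithms of algebraic numbers, and then invoke the transcendence results of Serre and Waldschmidt to rule out unexpected bijections of primes.

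First I would unpack the input. By Corollary \ref{cor_recover_group_and_primes}, $\varphi$ induces a topological group isomorphism $\tilde{\varphi}: G \to G'$ (where $G = \pi_1^{\mathrm{ab}}(X)$) and a bijection $\bar{\varphi}: \cM \to \cM'$, intertwining the Artin reciprocity maps $r: \ell \mapsto s_\ell$. Proposition \ref{prop_recover_p} gives $p = \Char \bF(\fp) = \Char \bF(\fp')$ and equality of $p$-ranks, placing $K, K'$ in the same case among (1), (2-1), (2-2), (2-3); the remaining discrimination between (2-1) and (2-2) uses internal invariants of $G$ such as torsion (reflecting the global units and roots of unity). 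Using the class field theory exact sequence
\[
1 \to \cO_{K_\fp}^\times / \overline{\cO_K^{\times, +}} \to G \to \Clp(K) \to 1
\]
and the $p$-adic logarithm, I would linearize the reciprocity data: for each $\ell \in \cM$, choose $n_\ell$ so that $\ell^{n_\ell} = (\alpha_\ell)$ is principal, and note that reciprocity sends $n_\ell \cdot r(\ell)$ to the class of $\alpha_\ell^{-1}$ in $\cO_{K_\fp}^\times / \overline{\cO_K^{\times, +}}$. Consequently, the image of $r(\ell)$ under $\ln$ is, up to rational factors and modulo logarithms of global units, $-\ln \alpha_\ell \in K_\fp$.

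The heart of the argument is then transcendence. The intertwining $\tilde{\varphi} \circ r = r' \circ \bar{\varphi}$ becomes a $\bZ_p$-linear relation $L(\ln \alpha_\ell) \equiv \ln \alpha'_{\bar{\varphi}(\ell)}$ modulo unit logarithms, with $L$ independent of $\ell$. In the rank-$1$ cases (1), (2-1), (2-2), the map $L$ is a single scalar; for any three primes $\ell_1, \ell_2, \ell_3 \in \cM$, the $2 \times 3$ matrix with rows $(\ln \alpha_{\ell_i})$ and $(\ln \alpha'_{\bar{\varphi}(\ell_i)})$ has rank $1$, its columns are $\bQ$-linearly independent (distinct primes are multiplicatively independent modulo torsion in $\cO_{K_\fp}^\times$), so Corollary \ref{cor_six_exponentials} forces the rows to be $\bQ$-dependent. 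Running this over all triples and using density of $\cM$ pins down $\bar{\varphi}$ to be induced by a field isomorphism $\sigma: K \to K'$. In case (2-3), $L$ is a $2 \times 2$ matrix over $\bZ_p$; I would form the $4 \times k$ matrix obtained by expressing $\ln \alpha_\ell$ and $\ln \alpha'_{\bar{\varphi}(\ell)}$ in bases of $K_\fp$ and $K'_{\mathfrak{p}'}$ over $\bQ_p$, and apply Theorem \ref{thm_independence} with $d = 4$ and $l = k \gg 0$: the rank lower bound $dl/(d+l) \to 4$ contradicts the forced rank $\leq 2$ unless the $\theta$-hypothesis fails, i.e.\ unless there is a nontrivial integer relation ${}^t\!\bv M \bw = 0$, which again yields the desired field isomorphism.

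The remaining assertions are light: $\sigma(\fp) = \fp'$ since each is the unique prime above $p$ excluded from the respective open subscheme, and compatibility with $\rho, \rho'$ is automatic since $\rho$ is intrinsic to the quandle. The real-quadratic exception in (2-1) arises because the nontrivial Galois involution of $K$ fixes $\fp$, so both $\sigma$ and its conjugate are candidates mapping $\fp$ to $\fp'$, yet only one need intertwine with $\varphi$ on $X$; in all other cases, the Galois ambiguity is either absent or already pinned down by the quandle data. The principal obstacle I anticipate is case (2-3): verifying that the $\alpha_\ell$ can be chosen so that the $\theta$-hypothesis of Theorem \ref{thm_independence} takes the sharp value $l/d$, controlling the class-group discrepancy introduced by passing from $r(\ell)$ to $\alpha_\ell$, and being careful to argue with $\bZ_p$-linearity rather than merely $\bQ_p$-linearity when reading off the ambiguity in $\sigma$.
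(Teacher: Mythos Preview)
Your overall architecture is correct and matches the paper: reduce to a bijection $\bar\varphi:\cM\to\cM'$ and a $\bQ_p$-linear $\psi:V\to V'$ intertwining the reciprocity maps, linearize via $p$-adic logarithms, and apply Six Exponentials / Waldschmidt. However, several steps as written would not go through.

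\textbf{Type matching.} You assert that cases (2-1) and (2-2) can be separated by the torsion of $G$. This is not justified, and the paper does not attempt it. Instead the paper first assumes $K$ is of type (2-2), embeds both $K_\fp$ and $K'_{\fp'}$ into the completion at a prime $\fP$ of $KK'$, and uses Six Exponentials to force $\alpha_1^m\in K'$, hence $K=K'$; only then, by symmetry, does it know that if $K$ is of type (1) or (2-1) then so is $K'$. Your shortcut would need an argument that, say, $G\cong\bZ_p$ cannot occur in both (2-1) and (2-2), and I do not see one.

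\textbf{The unit quotient in (2-1).} You write the rank-$1$ matrix with rows $(\ln\alpha_{\ell_i})$ and $(\ln\alpha'_{\bar\varphi(\ell_i)})$, but in case (2-1) these logarithms live in the $2$-dimensional space $K_\fp$, while $V$ is its quotient by $\bQ_p\cdot\Lnpe\gamma$ for a fundamental unit $\gamma$. The paper resolves this by observing that $\alpha\mapsto\Lnp N(\alpha)$ is a linear coordinate on $V$, so the invariant ratio is $\Lnp N(\fl_2)/\Lnp N(\fl_1)$; your matrix as stated is not of rank $1$ and the argument does not start. The paper then recovers $c(\fl)$ from these ratios and reads off $K$ from the set of split primes, rather than directly producing $\sigma$ from row-dependence.

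\textbf{Case (2-3).} Your $d=4$ matrix (two rows from $K$, two from $K'$) differs from the paper's $d=3$, $l=7$ matrix (rows $\Ln^\fP\alpha_i$, $\Ln^\fP\bar\alpha_i$, $\Ln^\fP\beta_i$). The $d=4$ version can be made to work, but you must first dispose of the case where $\{\fl,\bar\fl,\bar\varphi(\fl),\overline{\bar\varphi(\fl)}\}$ is multiplicatively dependent for \emph{every} $\fl$; the paper does the analogous case distinction for $d=3$ and this is where the conclusion $K=K'$ actually comes from. Without it, the $\theta$-hypothesis fails and Waldschmidt gives nothing.

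\textbf{The real quadratic exception.} Your explanation is not the right one. The obstruction is not a twofold ambiguity in $\sigma$; it is that in case (2-1) only $c:\cM\to\Spec\bZ$ is recovered, so $\bar\varphi$ may swap $\fl$ and $\bar\fl$ \emph{independently for each split prime} (see the $\bQ(\sqrt5)$, $\fp=(3)$ example at the end of the paper), and no single $\sigma$ accounts for this.
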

\begin{remark}
A similar statement holds for $Q(X, \cM)$ and $Q(X', \cM')$
by Corollary \ref{cor_recover_abelian}. 
\end{remark}

Let us begin the proof of the theorem. 
Write $p$ for the characteristic of $\bF(\fp)$, 
which can be recovered from $Q$ by Proposition \ref{prop_recover_p}
and is equal to the characteristic of $\bF(\fp')$. 
Let $G=\pi_1^{\mathrm{ab}}(X)$ 
and write $G_p$ for its $p$-part. 
The latter is a $\bZ_p$-module in a natural way, 
so let $V=G_p\otimes_{\bZ_p}\bQ_p$. 
Write $G'$, $G'_p$ and $V'$ for the corresponding groups 
associated to $X'$. 

The groups $G$ and $G'$, 
the sets $\cM$ and $\cM'$ 
and the maps $\pi: Q\to\cM$ and $\pi': Q'\to\cM'$ 
can be recovered from $Q$ and $Q'$ 
by Corollary \ref{cor_recover_group_and_primes}. 
(We are making distinction between $\pi$ and $\varpi$ 
since $X$ is not yet reconstructed and 
sometimes we cannot recover the inclusion $\cM\to X$.) 

We also have maps $r: \cM\to V$ and $r': \cM'\to V'$ 
given by the quandle operation: 
For $a\in\cM$, choose $x\in\pi^{-1}(a)$ and 
let $r(a) = (\hbox{$p$-part of $s_x$})\otimes 1$. 
These are the maps induced by the reciprocity maps 
in Class Field Theory. 

Thus the theorem is reduced to the following. 
\begin{theorem}
Let the notations be as above 
and assume that $V$ is nonzero. 

Let $\bar{\varphi}: \cM\to\cM'$ be a bijection 
and $\psi: V\to V'$ an isomorphism of $\bQ_p$-vector spaces 
satisfying $r'\circ \bar{\varphi}=\psi\circ r$. 

Then there exists an isomorphism $\sigma: K\to K'$ 
which maps $\fp$ to $\fp'$, 
and the residue characteristics $c(\fl)$ and $c(\bar{\varphi}(\fl))$ coincide 
for any $\fl\in\cM$. 

Furthermore, 
except in the real quadratic case, 
$\sigma$ can be chosen so that $\sigma(\fl)=\bar{\varphi}(\fl)$. 
\end{theorem}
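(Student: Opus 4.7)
The plan is to combine class field theory with the $p$-adic transcendence results from Subsection 4.2: CFT lets me write $r$ as a $p$-adic logarithm of generators of prime powers, after which the hypothesis $r' \circ \bar\varphi = \psi \circ r$ becomes a rank constraint on matrices of logarithms of algebraic integers, which the Six Exponentials and Waldschmidt theorems then force to come only from genuine multiplicative relations in $\overline{\bQ}$.

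First I make $r$ explicit. The exact sequence in the proof of Proposition \ref{prop_recover_p}, after tensoring with $\bQ_p$ and killing torsion, identifies $V$ with the cokernel of $\Ln : \cO_K^{\times,+} \otimes_\bZ \bQ_p \to K_\fp$. Fix a common multiple $h$ of $\#\Clp(K)$ and $\#\Clp(K')$ and, for each $\fl \in \cM$, write $\fl^h = (\alpha_\fl)$ with $\alpha_\fl$ totally positive; then $r(\fl) = -\tfrac{1}{h}\Ln(\alpha_\fl) \bmod \Ln\cO_K^{\times,+}$, and analogously for $r'$. Proposition \ref{prop_recover_p} recovers $p$ and $R := \dim_{\bQ_p} V$ from $Q$, and $R = R'$ since $\psi$ is an iso. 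The argument splits into $R = 1$ (covering cases (1), (2-1), (2-2)) and $R = 2$ (case (2-3)).

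The heart of the argument is as follows. Fix compatible embeddings $K, K' \hookrightarrow \overline{\bQ}_p$ extending $\fp$ and $\fp'$. In the case $R = 1$, for three primes $\fl_1, \fl_2, \fl_3 \in \cM$ with images $\fl'_i := \bar\varphi(\fl_i)$, the hypothesis $r' \circ \bar\varphi = \psi \circ r$ constrains the $2 \times 3$ matrix
\[
M = \begin{pmatrix} \Ln\alpha_{\fl_1} & \Ln\alpha_{\fl_2} & \Ln\alpha_{\fl_3} \\ \Ln\alpha_{\fl'_1} & \Ln\alpha_{\fl'_2} & \Ln\alpha_{\fl'_3} \end{pmatrix}
\]
(entries in $\overline{\bQ}_p$) to have its rows proportional modulo the one-dimensional log-unit subspaces of $K_\fp$ and $K'_{\fp'}$. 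Adjoining the columns $\Ln\epsilon_K$ and $\Ln\epsilon_{K'}$ (or $0$ if the relevant unit group is finite) and applying Corollary \ref{cor_six_exponentials} to the resulting extended matrix, one obtains either a $\bQ$-linear dependence among its columns or among its rows. Column dependence $\sum_i a_i\Ln\alpha_{\fl_i} = 0$ forces $\prod_i \alpha_{\fl_i}^{a_i} \in \mu_\infty$, hence $\prod_i \fl_i^{ha_i} = (1)$ in $\cO_K$; unique factorization kills $(a_i)$ for distinct primes, ruling out column dependence. Row dependence yields $\alpha_{\fl_i}^a \alpha_{\fl'_i}^b \in \mu_\infty$ for some $(a, b) \neq 0$ and all $i$; comparing valuations at places of $K K'$ forces $\fl_i$ and $\fl'_i$ to lie over the same rational prime, proving $c(\fl_i) = c(\bar\varphi(\fl_i))$ and hence the compatibility of $\varphi$ with $\rho, \rho'$. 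Running this over density-$1$ families matches the splitting behavior of rational primes in $K$ and $K'$ on a density-$1$ set, and by Chebotarev one recovers $K \cong K'$ with $\fp \mapsto \fp'$. The case $R = 2$ proceeds in the same spirit with four primes and Waldschmidt's Theorem \ref{thm_independence} (with $d = 2, \ell = 4$, giving $\theta(M) = 2$ generically).

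In the cases (1), (2-2), (2-3), the above arguments pin down $\bar\varphi$ as the restriction of the field isomorphism $\sigma$, giving the full compatibility. The real quadratic case (2-1) is genuinely exceptional: for a split $\ell$, $r(\fl) - r(\sigma\fl) = -\tfrac{1}{h}\Ln(\alpha_\fl/\sigma\alpha_\fl)$ is the logarithm of a norm-$1$ element of $\cO_{K_\fp}^\times$, and local CFT identifies the $\bQ_p$-log of norm-$1$ units precisely with $\bQ_p\Ln\epsilon$ (for $\epsilon$ a fundamental totally positive unit), so $r$ collapses Galois-conjugate primes to a single point of $V$, and $\bar\varphi$ is free to permute $\fl$ and $\sigma\fl$ independently at each split $\ell$ --- a permutation no single $\sigma$ can realize. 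The main obstacle in executing the plan will be ensuring that the $\bQ$-dependence extracted from the transcendence theorems is not absorbed into the log-unit subspace (which requires choosing $\fl_i$ via Chebotarev and the density-$1$ hypothesis) and verifying the Waldschmidt hypothesis in the $R = 2$ case.
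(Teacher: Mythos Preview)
Your overall strategy---translate $r'\circ\bar\varphi=\psi\circ r$ into rank constraints on matrices of $p$-adic logarithms and invoke transcendence---is exactly the paper's, and your explanation of why the real-quadratic case (2-1) is genuinely exceptional is correct. But the execution has a real gap in the $R=2$ case and a smaller one in case (2-1).

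\textbf{The $R=2$ case.} With your parameters $(d,\ell)=(2,4)$, Theorem~\ref{thm_independence} yields only $\mathrm{rank}\,M\ge d\ell/(d+\ell)=4/3$, i.e.\ rank $\ge 2$. For this to be a contradiction you would need your $2\times 4$ matrix with rows $(\Ln\alpha_{\fl_i})$ and $(\Ln\alpha_{\fl'_i})$ to have rank $\le 1$ over $(KK')_\fP$. It does not: the isomorphism $\psi:K_\fp\to K'_{\fp'}$ is only $\bQ_p$-linear, and since $K_\fp$, $K'_{\fp'}$ are degree-$2$ extensions of $\bQ_p$, the two rows are in general linearly independent over $\overline{\bQ}_p$. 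The paper's first proof repairs this by adjoining the \emph{conjugate} row $(\Ln\bar\alpha_{\fl_i})$: all three rows are then images of the single row $(\Ln\alpha_{\fl_i})$ under $\bQ_p$-linear maps $K_\fp\to(KK')_\fP$ (namely the identity, complex conjugation, and $\psi$), which forces the $3\times\ell$ matrix to have rank $\le 2$ over any extension field. One then needs $3\ell/(3+\ell)>2$, i.e.\ $\ell\ge 7$, so the paper takes seven primes, and must also arrange (by a preliminary reduction) that $\fl_i,\bar\fl_i,\bar\varphi(\fl_i)$ are multiplicatively independent so that $\theta(M)=\ell/d$. The paper's second proof avoids Waldschmidt entirely by first characterising the line $\bQ_p\subset V\cong K_\fp$ intrinsically (via pairs $\{\fl,\bar\fl\}$ with $r(\fl)+r(\bar\fl)\in W$) and then reducing to the $R=1$ argument; this uses only Corollary~\ref{cor_six_exponentials}.

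\textbf{Case (2-1) within $R=1$.} Your device of ``adjoining the columns $\Ln\epsilon_K$, $\Ln\epsilon_{K'}$'' does not produce a rank-$1$ matrix---those two columns are already linearly independent---so Corollary~\ref{cor_six_exponentials} gives nothing. More fundamentally, in this case $\Ln\alpha_{\fl}$ lives in the $2$-dimensional space $K_\fp$ and is only well-defined modulo $\bQ_p\Ln\gamma$, so the constraint from $\psi$ is not a rank-$1$ condition on your matrix over $\overline{\bQ}_p$. The paper's fix is to pass to norms: $\alpha\mapsto\Lnp N(\alpha)$ kills the unit line and gives a genuine $\bQ_p$-valued coordinate on $V$, after which the matrix entries are logarithms of rational integers and the rank-$1$ constraint holds honestly. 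The paper then recovers $c(\fl)$ by a one-sided characterisation (testing $r(\fl_2)/r(\fl_1)\equiv\Lnp l_2/\Lnp l_1\bmod\bQ^\times$ against candidate rational primes), rather than by directly comparing $K$ and $K'$.
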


Note that, 
if we start with the description of $G$ by Class Field Theory, 
there is a natural coordinate on $V$ given by the $p$-adic logarithm function. 
Here we start with $Q$, 
so there is no natural coordinate on $V$ 
and we cannot recover $\fl$ from $ r(\fl)$ directly. 
We can however take a number of prime ideals 
and compare the configurations of their images in $V$ and $V'$. 

If $K$ is a number field and $\fp$ is a prime in $K$, 
we denote the localization of $\cO_K$ at $\fp$ by $\cO_{K, \fp}$ 
and the composite map 
$\cO_{K, \fp}^\times\to\cO_{K_\fp}^\times\overset{\ln}{\to} K_\fp$ 
by $\Lnpe$. 
(We have to be a little careful about where we take the logarithms.) 

The following fact is easy to see. 
\begin{lemma}\label{lem_independence}
Let $k$ be a non-archimedean local field of characteristic $0$, 
and $\alpha_1, \dots, \alpha_n$ elements of $\cO_k^\times$. 
Then $\ln \alpha_1, \dots, \ln \alpha_n$ 
are linearly independent over $\bQ$ 
if and only if $\alpha_1, \dots, \alpha_n$ are 
multiplicatively independent. 

In particular, 
if $K$ is a number field, 
$\fp$ is a prime in $K$, 
and $\alpha_1, \dots, \alpha_n$ are elements of $\cO_{K, \fp}^\times$ 
such that 
the ideals $\alpha_1\cO_K, \dots, \alpha_n\cO_K$ are 
multiplicatively independent, 
then $\Lnpe\alpha_1, \dots, \Lnpe\alpha_n$ 
are linearly independent over $\bQ$. 
\end{lemma}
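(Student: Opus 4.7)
The plan is to reduce the whole statement to the single fact that the kernel of the $p$-adic logarithm $\ln:\cO_k^\times\to k$ coincides exactly with the torsion subgroup $\mu(k)$ of roots of unity in $k$. Granting this, both directions of the equivalence and the ``in particular'' clause follow formally.

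To establish the kernel description, I would use the standard decomposition $\cO_k^\times\cong\mu_{q-1}\times(1+\fm_k)$, where $q$ is the cardinality of the residue field of $k$. On $\mu_{q-1}$ the logarithm vanishes because $(q-1)\ln\zeta=\ln\zeta^{q-1}=\ln 1=0$. On $1+\fm_k$, the pair $(\ln,\exp)$ gives mutually inverse homeomorphisms between sufficiently small neighborhoods of $1$ and $0$, so $\ln$ is a local isomorphism; combined with compactness of $1+\fm_k$, this forces its kernel to be a finite subgroup, hence to consist of $p$-power roots of unity in $k$. Assembling the two factors yields $\ker\ln=\mu(k)$.

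From there, multiplicative dependence of $\alpha_1,\dots,\alpha_n$ (meaning some nontrivial integer monomial $\prod\alpha_i^{n_i}$ equals a root of unity) immediately gives $\sum n_i\ln\alpha_i=0$ by multiplicativity of $\ln$. Conversely, clearing denominators in a $\bQ$-linear relation yields an integer relation $\sum n_i\ln\alpha_i=0$, which by the kernel computation forces $\prod\alpha_i^{n_i}$ into $\mu(k)$; raising this product to the order of the root of unity produces a genuine multiplicative relation among the $\alpha_i$. For the ``in particular'' clause, I would pass from elements to principal ideals: any relation $\prod\alpha_i^{n_i}=\zeta\in\mu(K)$ in $\cO_{K,\fp}^\times$ would imply $\prod(\alpha_i\cO_K)^{n_i}=\cO_K$ as fractional ideals, contradicting the assumed multiplicative independence of $\alpha_1\cO_K,\dots,\alpha_n\cO_K$ in the free abelian group of fractional ideals of $K$ unless all $n_i=0$. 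The first part of the lemma, applied with $k=K_\fp$ and with $\Lnpe\alpha_i$ in place of $\ln\alpha_i$, then concludes.

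The only genuine obstacle is the kernel computation, which is a standard but essential input from $p$-adic analysis; once $\ker\ln=\mu(k)$ is in hand, everything else is a routine translation between additive and multiplicative language.
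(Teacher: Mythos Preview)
Your argument is correct. The paper itself does not supply a proof: it introduces the lemma with the phrase ``The following fact is easy to see'' and gives no further justification, so there is nothing to compare your approach against. Your reduction to the identity $\ker\ln=\mu(k)$ is exactly the standard reason this fact is considered routine, and your derivation of that identity via the decomposition $\cO_k^\times\cong\mu_{q-1}\times(1+\fm_k)$ together with the local invertibility of $\ln$ on a small enough principal unit subgroup is the expected one.

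One small remark on wording: in the ``in particular'' clause you write $\zeta\in\mu(K)$, whereas a priori the kernel computation only gives $\zeta\in\mu(K_\fp)$. This is harmless, since the $\alpha_i$ lie in $K$ and hence so does $\prod\alpha_i^{n_i}$, forcing $\zeta\in K\cap\mu(K_\fp)=\mu(K)$; alternatively one can simply raise to the order of $\zeta$ in $K_\fp^\times$ before passing to ideals. Either way the conclusion stands.
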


Denote by $R$ the dimension of $V$, 
which is equal to the dimension of $V'$. 

\subsection{Case $R=1$} 
Let $\fl_1$ and $\fl_2$ be elements of $\cM$. 
If $V$ is $1$-dimensional, 
the quantity $ r(\fl_2)/ r(\fl_1)\in\bQ_p$ 
is a well-defined invariant 
in the sense that if $\bar{\varphi}(\fl_1)=\fl'_1$ and 
$\bar{\varphi}(\fl_2)=\fl'_2$ 
then $ r(\fl_2)/ r(\fl_1)= r'(\fl'_2)/ r'(\fl'_1)$ holds. 
Let us express $r(\fl_2)/r(\fl_1)$ 
in terms of $p$-adic logarithms. 

\medskip
(1)
If $K=\bQ$, 
then the image of $\fl\in\cM$ in $G$ can be identified with $N(\fl)=c(\fl)$ 
under the identification of $G$ with $\bZ_p^\times$. 
Thus 
\[
 r(\fl_2)/ r(\fl_1) = \Lnp N(\fl_2)/\Lnp N(\fl_1). 
\]

(2-1)
$K$ is real and $e(\fp)f(\fp)=2$. 
Let $\gamma$ be a unit which is not a root of unity. 
Then $V=((\cO_{K_\fp}^\times)_p\otimes_{\bZ_p}\bQ_p)/\bQ_p\gamma_p$, 
where $\gamma_p$ is the $p$-component of $\gamma$ 
in $\cO_{K_\fp}^\times$. 
The $p$-adic logarithm function induces 
$(\cO_{K_\fp}^\times)_p\otimes_{\bZ_p}\bQ_p\cong K_\fp$, 
and satisfies $\Lnpe \bar{z}=\overline{\Lnpe z}$ 
since $\bar{\fp}=\fp$. 
Here $\bar{z}$ is the conjugate of $z\in K_\fp$ over $\bQ_p$. 
If $K=\bQ(\sqrt{m})$, 
an isomorphism between $K_\fp$ and $\bQ_p^2$ 
as $\bQ_p$-vector spaces is given by 
$z\mapsto (z+\bar{z}, (z-\bar{z})/\sqrt{m})$. 
Composing this map with $\Lnpe$, we obtain a map 
$\alpha\mapsto (\Lnpe N(\alpha), (\Lnpe \alpha\bar\alpha^{-1})/\sqrt{m})$. 
This maps $\gamma$ to $(0, 2\ \Lnpe \gamma/\sqrt{m})$. 
Thus $\Lnpe N(\alpha)=\Lnp N(\alpha)$ 
gives a linear coordinate of the image of $\alpha$ in $V$. 

Now let $\fl_1$ and $\fl_2$ be elements of $\cM$. 
One can find a positive integer $n$ such that $\fl_1^n$ and $\fl_2^n$ are 
principal ideals generated by totally positive numbers 
$\alpha_1$ and $\alpha_2$. 
Then we have 
\[
 r(\fl_2)/ r(\fl_1) 
= \Lnp N(\alpha_2)/\Lnp N(\alpha_1)
= \Lnp N(\fl_2)/\Lnp N(\fl_1). 
\]

(2-2)
$K$ is complex and $e(\fp)f(\fp)=1$. 
Then $V=(\cO_{K_\fp}^\times)_p\otimes_{\bZ_p}\bQ_p$. 
Let $\fl_1$ and $\fl_2$ be elements of $\cM$. 
One can find a positive integer $n$ such that $\fl_1^n$ and $\fl_2^n$ 
are generated by $\alpha_1$ and $\alpha_2$, 
and then we have 
\[
 r(\fl_2)/ r(\fl_1) = \Lnpe \alpha_2/\Lnpe \alpha_1. 
\]

\medbreak
In any case, 
one can take any number of prime ideals $\fl_i$ ($i=1, 2, \dots$) 
from $\cM$ 
with different residue characteristics 
and with $e(\fl_i)=f(\fl_i)=1$ 
since $\cM$ has density $1$. 
Let $\fl'_i$ be the corresponding elements of $\cM'$.

First consider the case where $K$ is of type (2-2). 
Take a positive number $n$ such that $\fl_i^n$ are principal 
and let $\alpha_i$ denote generators. 

If $K'$ is also of type (2-2), 
let $\alpha'_1$, $\alpha'_2$ and $\alpha'_3$ be defined in the same way. 
If $K\cong K'$, identify them by the isomorphism $\sigma$ 
which maps $\fp$ to $\fp'$ and write $\fP$ for $\fp$. 
If $K\not\cong K'$, 
then $KK'$ would be of degree $4$ over $\bQ$. 
Let $\fP$ be the prime ideal in $KK'$ over $\fp$ and $\fp'$. 
In either case, the logarithms at $\fp$ and $\fp'$ can be 
regarded as the logarithms at $\fP$. 
If $K'$ were of type (1) or (2-1), 
write $\alpha'_i = N(\fl'_i)$.
In this case logarithms at $(p)$ is equal to the logarithm at $\fp$. 
Write $\fP$ for $\fp$. 

In any case, we have 
\[
\mathrm{rank}
\begin{pmatrix}
\Ln^\fP \alpha_1 & \Ln^\fP \alpha_2 & \Ln^\fP \alpha_3 \\
\Ln^\fP \alpha'_1 & \Ln^\fP \alpha'_2 & \Ln^\fP \alpha'_3 
\end{pmatrix}
=1. 
\]
By Corollary \ref{cor_six_exponentials} 
the rows or the columns must be linearly dependent over $\bQ$. 
Since $\fl_i$ are multiplicatively independent
the columns are independent by Lemma \ref{lem_independence}. 
Thus $\Ln^\fP \alpha_1$ and $\Ln^\fP \alpha'_1$ 
must be dependent over $\bQ$, 
and $\alpha_1$ and $\alpha'_1$ 
are multiplicatively dependent. 
It follows that $\alpha_1^m$ is contained in $K'$ for some $m\not=0$. 
We see from $e(\fl_1)=f(\fl_1)=1$ that $\alpha_1^m$ generates $K$ over $\bQ$,  
so $K'\supseteq K$, hence $K=K'$, holds. 

The dependence of rows over $\bQ$ implies 
that $\fl_i$ and $\fl'_i$ are multiplicatively dependent, 
hence that $\fl_i=\fl'_i$. 
Since $V$ is a $1$-dimensional vector space, 
$\psi$ is the identity map 
(under the identification by $\sigma$). 
Since $\cM\to (\cO_{K_\fp}^\times)_p\otimes_{\bZ_p}\bQ_p$ 
is always injective and 
$V$ can be identified with $(\cO_{K_\fp}^\times)_p\otimes_{\bZ_p}\bQ_p$ 
in this case, 
we see that $\bar{\varphi}$ is the identity map. 

\medbreak
Next assume that $K$ is of type (1) or (2-1). 
Then $K'$ is also of type (1) or (2-1). 
We show that the map $c$ and the field $K$ can be 
recovered from $ r: \cM\to V$. 
The residue characteristic $c(\fl_1)$ 
is the unique rational prime number $l_1$ such that 
\begin{eqnarray*}
 r(\fl_2)/r(\fl_1) & \equiv & \Lnp l_2/\Lnp l_1 \mod \bQ^\times \quad\textrm{ and } \\
 r(\fl_3)/r(\fl_1) & \equiv & \Lnp l_3/\Lnp l_1 \mod \bQ^\times 
\end{eqnarray*}
hold for certain primes $l_2$ and $l_3$. 
In fact, these relations imply  
\[
\mathrm{rank}
\begin{pmatrix}
\Lnp N(\fl_1) & \Lnp N(\fl_2) & \Lnp N(\fl_3) \\
\Lnp l_1 & a\cdot\Lnp l_2 & b\cdot\Lnp l_3 
\end{pmatrix}
=1 
\]
for some rational numbers $a$ and $b$. 
Since we chose $\fl_i$ to have distinct residue characteristics, 
the columns are independent over $\bQ$. 
By Corollary \ref{cor_six_exponentials}, 
the rows are dependent, and 
$N(\fl_1)$ is a power of $l_1$. 

The set $\{l\in\Spec\bZ \mid \#c^{-1}(l)=2\}$ is of density $1/2$ (resp. empty) 
if $K$ is quadratic (resp. $K=\bQ$), 
and this set determines $K$.

\subsection{Case $R=2$} 
This is the case where $K$ is complex and $e(\fp)f(\fp)=2$. 

We provide two proofs. 
The first one makes use of Theorem \ref{thm_independence} 
and is somewhat more direct. 
The second one is based on a characterization 
of a special subspace $\bQ_p\subset V\cong K_\fp$ 
and only uses Corollary \ref{cor_six_exponentials}. 

\begin{proof}[First proof]
In $KK'$, let $\fP$ be a prime over $\fp$ and $\fp'$. 

Let $\cM_1=\{\fl\in\cM\mid e(\fl)f(\fl)=1, \bar{\fl}\in\cM\}$ 
and $\fl$ an element of $\cM_1$. 
Let $n$ be a positive integer such that $\fl^n$ is principal, 
generated by $\alpha$. 
Then $\{\Lnpe \alpha, \Lnpe \bar{\alpha}\}$ spans $K_\fp$ over $\bQ_p$. 
In fact, if not, we would have $\Lnpe \alpha=z\cdot\Lnpe \bar{\alpha}$
for some $z\in\bQ_p$. 
Conjugating, we also have $\Lnpe \bar{\alpha}=z\cdot\Lnpe \alpha$ 
and so $z$ must be $\pm 1$. 
This contradicts Lemma \ref{lem_independence}. 

Let $\fl'=\bar{\varphi}(\fl)$ 
and $\fl''=\bar{\varphi}(\bar{\fl})$. 
Assume that both $\{\fl, \bar{\fl}, \fl'\}$
and $\{\fl, \bar{\fl}, \fl''\}$ 
are multiplicatively dependent 
in the ideal group of $KK'$. 
Then some powers of $\fl'$ and $\fl''$ are contained in 
$\langle \fl, \bar{\fl}\rangle$, 
and so $\fl'$ and $\fl''$ lie over $c(\fl)$. 
If $K\not\cong K'$, there would be $4$ primes in $KK'$ over $c(\fl)$, 
namely $\fl_1:=\fl+\fl'$, $\fl_2:=\fl+\fl''$, $\fl_3:=\bar{\fl}+\fl'$ and $\fl_4:=\bar{\fl}+\fl''$, 
and $\fl'=\fl_1\fl_3$ is not in the $\bQ$-span of 
$\fl=\fl_1\fl_2$ and $\bar{\fl}=\fl_3\fl_4$. 
Thus $K=K'$ 
and $\{\fl', \fl''\}=\{\fl, \bar{\fl}\}$. 
Since $\{r(\fl), r(\bar{\fl})\}$ spans $V\cong K_\fp$ over $\bQ_p$, 
$\psi$ is either the identity or the conjugation map, 
and hence similarly for $\bar{\varphi}$. 

Thus we suppose that 
either $\{\fl, \bar{\fl},\bar{\varphi}(\fl)\}$
or $\{\fl, \bar{\fl},\bar{\varphi}(\bar{\fl})\}$ 
is multiplicatively independent for any $\fl\in\cM_1$, 
and draw a contradiction. 

We will apply Theorem \ref{thm_independence} 
with $d=3$ and $l=7$. 
Choose $\fl_1, \dots, \fl_7\in\cM_1$ in the following way. 
Let $\fl_1$ be an element of $\cM_1$ and 
write $\fl'_1=\bar{\varphi}(\fl_1)$. 
From the above, we may assume that $\fl_1, \overline{\fl}_1$ and $\fl'_1$ 
are multiplicatively independent. 
When $\fl_1, \dots, \fl_{k-1}$ are chosen, 
let $\fl_k$ be an element of $\cM_1$ 
such that, denoting $\bar{\varphi}(\fl_k)$ by $\fl'_k$, 
the residue characteristics $c(\fl_k)$ and $c(\fl'_k)$ are both different from 
any of $c(\fl_i)$ and $c(\fl'_i)$, $1\leq i < k$, 
and that $\fl_k, \overline{\fl}_k$ and $\fl'_k$ 
are multiplicatively independent. 
We can carry this out since $\cM$ is assumed to be of density $1$. 

Let $n$ be a positive integer such that 
$\fl_i^n=(\alpha_i)$ and $(\fl'_i)^n=(\beta_i)$ holds. 
Then we have 
\[
\mathrm{rank}
\begin{pmatrix}
\Ln^\fP \alpha_1 & \Ln^\fP \alpha_2 & \dots & \Ln^\fP \alpha_7 \\
\Ln^\fP \bar{\alpha}_1 & \Ln^\fP \bar{\alpha}_2 & \dots & \Ln^\fP \bar{\alpha}_7 \\
\Ln^\fP \beta_1 & \Ln^\fP \beta_2 & \dots & \Ln^\fP \beta_7
\end{pmatrix}
\leq 2. 
\]
In fact, since $K_\fp$ is of dimension $2$ over $\bQ_p$, 
the entries of the first row can be written 
as a $\bQ_p$-linear combination of two of them, 
say $\Ln^\fP \alpha_i$ and $\Ln^\fP \alpha_j$. 
The other rows are obtained by applying $\bQ_p$-linear maps 
to the entries of the first row. 
Thus columns of the matrix can be written as a $\bQ_p$-linear combination 
of the $i$-th and $j$-th columns. 

The entries are linearly independent over $\bQ$: 
The prime ideals (in $K$ or $K'$) corresponding to entries in different columns 
belong to different residue characteristics, so a linear relation of entries 
implies a linear relation of entries in a column. 
But we assumed that entries in a column are linearly independent. 

Thus the condition stated after Theorem \ref{thm_independence}
is satisfied, 
and the theorem asserts that 
the rank is at least $3\cdot 7/(3+7)>2$, a contradiction. 
\end{proof}

\begin{proof}[Second proof]
We characterize the subspace $\bQ_p$ of $V\cong K_\fp$ 
in terms of the map $r: \cM\to V$. 
\begin{lemma}
Let us identify $V$ with $K_\fp$. 

(1)
The following conditions are equivalent 
for a $1$-dimensional $\bQ_p$-subspace $W$ of $V$. 
\begin{itemize}
\item[(a)]
$W=\bQ_p$. 
\item[(b)]
For any finite subset $E$ of $\cM$, 
the set 
$P:=\{\{\fl, \fl'\}\subset \cM\setminus E \mid  r(\fl)+ r(\fl')\in W, \fl\not=\fl'\}$ 
is nonempty (hence infinite). 
\item[(c)]
The set $P:=\{\{\fl, \fl'\}\subset \cM \mid  r(\fl)+ r(\fl')\in W, \fl\not=\fl'\}$ 
has at least $3$ elements. 
\end{itemize}

(2)
If $\fl\in \cM$ is such that $ r(\fl)\not\in\bQ_p$, 
there is at most one $\fl'\in \cM$ 
such that $ r(\fl)+ r(\fl')\in\bQ_p$, 
namely $\fl'=\bar{\fl}$ (assuming it is contained in $\cM$). 
\end{lemma}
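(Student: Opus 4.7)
The plan is as follows. Part~(2) follows from the conjugation symmetry on $V \cong K_\fp$: if $r(\fl) + r(\fl') \in \bQ_p$, applying conjugation yields $r(\bar\fl) + r(\bar{\fl'}) = r(\fl) + r(\fl')$, hence $r(\fl\fl'\bar\fl^{-1}\bar{\fl'}^{-1}) = 0$. Since the restriction of $r$ to the group of ideals prime to $\fp$ has trivial kernel (indeed, if $\mathfrak{a}^n = (\alpha)$ and $r(\mathfrak{a}) = 0$, then $\Lnpe\alpha = 0$, so $\alpha$ is a root of unity in $\cO_{K_\fp}$ and hence in $K$, forcing $\mathfrak{a} = (1)$), we conclude $\fl\fl' = \bar\fl\bar{\fl'}$ as ideals. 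Unique factorization gives $\{\fl,\fl'\} = \{\bar\fl,\bar{\fl'}\}$; since $r(\fl) \notin \bQ_p$ rules out $\fl = \bar\fl$, we must have $\fl' = \bar\fl$. For part~(1), $(a) \Rightarrow (b)$ uses the density~$1$ hypothesis on $\cM$: Chebotarev yields infinitely many rational primes $q \neq p$ that split in $K$ with both primes above $q$ lying in $\cM \setminus E$, and each resulting pair $\{\fl,\bar\fl\}$ satisfies $r(\fl)+r(\bar\fl) = r((q)) = -\Ln q \in \bQ_p$. The implication $(b) \Rightarrow (c)$ is immediate by enlarging $E$ to contain all primes appearing in a hypothetically finite~$P$.

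The heart is $(c) \Rightarrow (a)$. Assuming $W \neq \bQ_p$ and $|P| \geq 3$, I split into two subcases. If $\overline W = W$, then conjugation acts on the $1$-dimensional $W$ as $-1$ (acting as $+1$ would force $W \subseteq \bQ_p$, hence $W = \bQ_p$), so $W = \bQ_p\tau$ for some $\tau$ with $\bar\tau = -\tau$. Any $(\fl,\fl') \in P$ then satisfies $\overline{r(\fl)+r(\fl')} = -(r(\fl)+r(\fl'))$, giving $r(\fl\bar\fl\fl'\bar{\fl'}) = 0$ and the impossible ideal identity $\fl\bar\fl\fl'\bar{\fl'} = (1)$; so $P = \emptyset$, contradicting~(c). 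If instead $\overline W \neq W$, I will derive a contradiction by applying Corollary~\ref{cor_six_exponentials} to a $2 \times 3$ matrix of $p$-adic logarithms built from the three given pairs.

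Concretely, pick three distinct pairs $\{\fl_i,\fl'_i\} \in P$, choose $n$ with $\fl_i^n = (\alpha_i)$ and $(\fl'_i)^n = (\alpha'_i)$, set $\beta_i = \alpha_i \alpha'_i$, and consider
\[
M = \begin{pmatrix} \Lnpe \beta_1 & \Lnpe \beta_2 & \Lnpe \beta_3 \\ \Lnpe \bar\beta_1 & \Lnpe \bar\beta_2 & \Lnpe \bar\beta_3 \end{pmatrix}.
\]
Writing $W = \bQ_p w_0$ and using $\Lnpe \beta_i \in W$ together with $\Lnpe \bar\beta_i = \overline{\Lnpe\beta_i}$, each column takes the form $\lambda_i(w_0,\overline{w_0})$, so the rank of $M$ over $K_\fp$ equals $1$. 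Corollary~\ref{cor_six_exponentials} then forces either the rows or the columns of $M$ to be $\bQ$-linearly dependent. Column dependence translates via Lemma~\ref{lem_independence} to a multiplicative relation $\prod(\beta_i)^{a_i} = (1)$, equivalent to a vanishing $\bQ$-combination of the indicator vectors of the three $2$-element sets $\{\fl_i,\fl'_i\}$; a brief case check confirms that three distinct such indicator vectors are always linearly independent. Row dependence $a\Lnpe\beta_i + b\Lnpe\bar\beta_i = 0$ yields $(\beta_i)^a(\bar\beta_i)^b = (1)$ for each~$i$, which by inspecting prime exponents forces every pair to be conjugation-stable (either both primes self-conjugate, or $\fl'_i = \bar\fl_i$); but then $r(\fl_i)+r(\fl'_i) \in \bQ_p \cap W = \{0\}$, whence $\fl_i\fl'_i = (1)$, again impossible.

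The main obstacle I foresee is the combinatorial rigidity required to apply the six-exponentials corollary: both $\bQ$-independence statements must hold for \emph{three arbitrary} pairs supplied by~(c), not for three carefully chosen ones. Column independence reduces to showing that three distinct edges in a graph have linearly independent indicator vectors, which requires a short case analysis on how the three pairs overlap (the only nontrivial configuration being the triangle). Row independence forces every pair to be conjugation-stable, and only then does the identity $W \cap \bQ_p = \{0\}$ (which requires $W \neq \bQ_p$) let us derive the contradiction $\fl_i\fl'_i = (1)$ from triviality of $\ker r$. Once these combinatorial and group-theoretic steps are in place, the contradiction with rank~$1$ of $M$ follows directly from Corollary~\ref{cor_six_exponentials}.
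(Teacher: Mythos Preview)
Your proof is correct and follows the same strategy as the paper: part~(2) and $(a)\Rightarrow(b)\Rightarrow(c)$ are argued identically, and for $(c)\Rightarrow(a)$ both apply Corollary~\ref{cor_six_exponentials} to the $2\times3$ matrix with rows $(\Lnpe\beta_j)_j$ and $(\Lnpe\bar\beta_j)_j$, using the same edge-indicator argument for $\bQ$-independence of the columns and deducing from row dependence that each pair $\{\fl_j,\fl'_j\}$ is conjugation-stable. Your preliminary case split on whether $\overline W=W$ is harmless but unnecessary---the paper runs the matrix argument uniformly and concludes directly that $z_1+z_2\in\bQ_p$ (hence $W=\bQ_p$), and indeed your own row-dependence analysis already covers that case; the sign in $r(\fl)+r(\bar\fl)=-\Ln q$ is also inessential to the argument.
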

\begin{proof}
(1)
(a) $\Rightarrow$ (b) 
We have $r(\bar{\fl})=\overline{r(\fl)}$, 
so $r(\fl)+ r(\bar{\fl})\in \bQ_p$ holds for any $\fl\in\cM$. 
Since $\cM$ has density $1$, 
one can find $\fl\in\cM\setminus (E\cup \overline{E})$ 
such that $\fl\not=\bar{\fl}$. 

(b) $\Rightarrow$ (c) is obvious. 

(c) $\Rightarrow$ (a) 
Let $\{\fl_1, \fl_2\}$, $\{\fl_3, \fl_4\}$ and $\{\fl_5, \fl_6\}$ 
be distinct sets of two primes 
such that 
$r(\fl_1)+ r(\fl_2)$, $r(\fl_3)+ r(\fl_4)$ and $r(\fl_5)+ r(\fl_6)$ 
are contained in $W$. 
Write $z_i$ for $r(\fl_i)=(\Lnpe \alpha_i)/n$, 
where $\fl_i^n$ is principal with generator $\alpha_i$. 
Note that $z,\ w\in K_\fp$ are linearly dependent over $\bQ_p$ 
if and only if 
\[
\mathrm{rank}
\begin{pmatrix}
z & w \\ 
\bar{z} & \bar{w} 
\end{pmatrix}
\leq 1. 
\]
Thus we have 
\[
\mathrm{rank}
\begin{pmatrix}
z_1+z_2 &  z_3+z_4 & z_5+z_6 \\
\bar{z}_1+\bar{z}_2 &  \bar{z}_3+\bar{z}_4 & \bar{z}_5+\bar{z}_6 
\end{pmatrix}
=1. 
\]
Assume that $a(z_1+z_2)+b(z_3+z_4)+c(z_5+z_6)=0$ holds 
for $a, b, c\in\bZ$. 
Then $(\fl_1\fl_2)^a(\fl_3\fl_4)^b(\fl_5\fl_6)^c=(1)$ holds. 
Write $k$ for $\#\{\fl_1, \dots, \fl_6\}$, 
and we have a relation of the form 
$a\boldsymbol{v}_1+b\boldsymbol{v}_2+c\boldsymbol{v}_3=
\boldsymbol{o}$, 
where $\boldsymbol{v}_i\in\bZ^k$ are pairwise distinct vectors 
with $2$ entries equal to $1$ and the other entries $0$. 
Then it is easy to see that $a=b=c=0$, 
and the columns of the above matrix are 
linearly independent over $\bQ$. 

Thus by Corollary \ref{cor_six_exponentials} 
the rows are linearly dependent over $\bQ$. 
It follows that $\fl_1\fl_2=\bar{\fl}_1\bar{\fl}_2$, 
and $z_1+z_2\in\bQ_p$ holds. 

(2)
If $r(\fl)+r(\fl')\in \bQ_p$, 
then $r(\fl)+r(\fl')=r(\bar{\fl})+r(\overline{\fl'})$ 
holds. 
It follows that $\fl\,\fl'=\bar{\fl}\,\overline{\fl'}$. 
Assuming that $r(\fl)\not\in\bQ_p$, 
$\fl\not=\bar{\fl}$ and hence 
$\fl'=\bar{\fl}$ holds. 
\end{proof}

\begin{remark}
In (1), 
we also have the following sufficient condition: 
The set $\{\fl\in\cM \mid  r(\fl)\in W\}$ is infinite (or has at least $3$ elements). 

If we assume that $\cM$ contains infinitely many 
(or at least $3$) primes $\fl$ 
with $e(\fl)f(\fl)=2$ (or equivalently $\fl=\bar{\fl}$), 
it is also a necessary condition. 
\end{remark}

By this lemma 
the subspace $W$ corresponding to $\bQ_p$ can be recovered 
from $r: \cM\to V$. 
The set $\cM\cap\overline{\cM}$ can be characterized 
as the set of $\fl\in\cM$ satisfying the condition that 
either $r(\fl)\in W$ holds 
or there exists $\fl'\in \cM$ such that $r(\fl)+r(\fl')\in W$. 
For $\fl\in\cM\cap\overline{\cM}$, 
its conjugate $\bar{\fl}$ is characterized as follows: 
If $r(\fl)\in W$, then $\overline{\fl}=\fl$. 
If not, $\bar{\fl}$ is the unique $\fl'\in\cM$ 
such that $r(\fl)+r(\fl')\in W$.

Thus for $\fl_1, \fl_2\in\cM\cap\overline{\cM}$, 
\[
(r(\fl_2)+ r(\bar{\fl}_2))/(r(\fl_1)+ r(\bar{\fl}_1))
= \Lnp N(\fl_2)/\Lnp N(\fl_1)
\]
was recovered from $r: \cM\to V$. 

As in the case of type (1) or (2-1), 
we can recover the residue characteristics 
$c: \cM\cap\overline{\cM}\to \Spec\bZ$ 
and hence a set of rational primes of density $1/2$ which split in $K$. 
Thus $K$ is determined, 
and one can find an isomorphism of $K$ and $K'$. 
The isomorphism $\psi$ induces the identity map on $W$. 
If $\fl\in\cM\cap\overline{\cM}$ is such that $ r(\fl)\not\in W$, 
then $c(\bar{\varphi}(\fl))=c(\fl)$, 
so $\bar{\varphi}(\fl)$ is either $\fl$ or $\overline{\fl}$. 
Thus, choosing the other isomorphism of $K$ and $K'$ if necessary, 
we may assume that $\bar{\varphi}(\fl)=\fl$. 
Now the linear map $\psi: V\to V'$ is the identity map, 
and so is $\cM\to \cM'$ since $ r$ is injective. 
\end{proof}

\section{Automorphism groups}

For a topological quandle $Q$, 
let us denote by $\Aut(Q)$ 
the group of homeomorphic automorphisms of $Q$. 
Our aim in this section is to study $\Aut(Q)$ 
for topological quandles $Q$ associated to Galois covers. 

Let $X$ be a normal, separated and integral scheme 
of finite type over $\bZ$, 
$\cM$ a set of closed points of $X$ 
and $\tilde{X}$ a Galois cover of $X$ unramified over $\cM$, 
possibly of infinite degree. 
Denote $Q(\tilde{X}/X, \cM)$ by $Q$ 
and $\Aut(\tilde{X}/X)$ by $G$. 
Write $\pi: Q\to\cM$ and $c: \cM\to\Spec\bZ$ for the natural maps. 

If $G$ is abelian 
and $a$ is an element of $\cM$, 
the Frobenius automorphism associated to $x\in \pi^{-1}(a)$ is 
independent of $x$, 
so let $s_a$ denote it. 

\begin{proposition}
(1) 
If $G$ is abelian, 
a choice of a section of $\pi: Q\to\cM$ 
determines an action of $\Aut(X, \cM)$ 
on $Q$. 

(2)
If $G$ is abelian, 
then there is a natural faithful action of 
$\prod_{a\in\cM} G/\overline{\langle s_a\rangle}$ 
on $Q$. 

(3)
If $X=\Spec\cO_K\setminus S$ for a number field $K$ 
and a finite set $S$ of primes 
and $\cM$ has density $1$, 
then there is a natural homomorphism 
$\Aut(Q)\to S_\cM$, 
where $S_\cM$ is the (discrete) group of 
bijections from $\cM$ to $\cM$. 

If $G$ is abelian, 
then the kernel is isomorphic to $\prod_{a\in\cM} G/\overline{\langle s_a\rangle}$ 
via the action in (2). 
In other words, 
there is a natural exact sequence 
\[
1\to \prod_{a\in\cM} G/\overline{\langle s_a\rangle}\to\Aut(Q)
\to S_\cM. 
\]
\end{proposition}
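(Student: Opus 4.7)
The plan is to treat the three parts in order, using the ambient $G$-action on $Q$, the section in (1), and for (3) a density argument to extract the kernel.

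For (1), a section $\sigma\colon \cM\to Q$ gives a fiberwise bijection $Q\cong \coprod_{a\in\cM} G/\overline{\langle s_a\rangle}$ sending $g\cdot \sigma(a)$ to $g\overline{\langle s_a\rangle}$. When $G$ is abelian, any $\phi\in \Aut(X,\cM)$ lifts to $\tilde X$ uniquely up to $G$-conjugation, which is trivial, and hence induces a well-defined $\phi_*\colon G\to G$ with $\phi_*(s_a)=s_{\phi(a)}$. I would define the action by $\phi\cdot(g\sigma(a)):=\phi_*(g)\sigma(\phi(a))$; it is well-defined on cosets because $\phi_*$ carries $\overline{\langle s_a\rangle}$ to $\overline{\langle s_{\phi(a)}\rangle}$, and a direct check using abelianness and $\phi_*(s_a)=s_{\phi(a)}$ shows it preserves $\rhd$. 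For (2), I would let $(g_a)_{a}$ act by $y\mapsto g_{\pi(y)}\cdot y$ through the existing $G$-action on $Q$. Verification of the quandle axiom reduces in each of the four cases, depending on whether $\pi(x)$ and $\pi(y)$ equal a given $b$, to $G$ being abelian, which yields $s_{gx}=s_x$ and ensures that $s_{\pi(x)}$ commutes with the translations. Faithfulness is immediate because each fiber is a free torsor under its factor.

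For (3), the homomorphism $\Aut(Q)\to S_\cM$ comes from Corollary~\ref{cor_recover_group_and_primes}: the fibers of $\pi$ coincide with the closures of $\Inn(Q)$-orbits, a description intrinsic to $Q$ as a topological quandle, so any $f\in \Aut(Q)$ permutes the fibers and induces a bijection of $\cM$. For the kernel (still assuming $G$ abelian), let $f\in \Aut(Q)$ preserve each fiber. Then $\pi(f(x))=\pi(x)$ forces $s_{f(x)}=s_x$, so the quandle identity $f\circ s_x=s_{f(x)}\circ f$ becomes $f\circ s_x=s_x\circ f$. Hence $f$ commutes with every element of $\Inn(Q)$, and by continuity of the $G$-action together with the density of $\Inn(Q)$ in $G$ established in Section~4, with every element of $G$. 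Restricted to $\pi^{-1}(a)$, the map $f$ is then a $G$-equivariant bijection of a $G/\overline{\langle s_a\rangle}$-torsor; since the group is abelian this must be translation by a unique $g_a\in G/\overline{\langle s_a\rangle}$. The resulting family $(g_a)_a$ acts on $Q$ via (2) precisely as $f$, which together with the faithfulness from (2) identifies the kernel with $\prod_{a}G/\overline{\langle s_a\rangle}$.

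The main obstacle will be this last passage in (3) from fiberwise commutation with the individual Frobenii to commutation with the whole group $G$: it genuinely uses both the density $\overline{\Inn(Q)}=G$ (hence the density-one hypothesis on $\cM$) and the continuity of the $G$-action on the possibly large profinite quandle $Q$. Everything else amounts to algebraic bookkeeping once the torsor description of each fiber is exploited.
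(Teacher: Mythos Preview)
Your proposal is correct and follows essentially the same route as the paper: the paper packages (1)--(3) into a general lemma about quandles of the form $Q(G,\{H_\lambda\},\{z_\lambda\})$ and then specializes, but the content of that lemma is exactly what you wrote---in particular, for (3) the paper also deduces $s_{f(x)}=s_x$, hence $f\circ s_x=s_x\circ f$, then passes from $\Inn(Q)$ to $G$ by density and continuity, and reads off a translation $(g_a)_a$ on each fiber. The only stylistic difference is that your definition in (2), $y\mapsto g_{\pi(y)}\cdot y$ via the ambient $G$-action, is manifestly section-independent, whereas the paper defines the action in coset coordinates and then checks independence of the chosen representatives.
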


\begin{proof}
Recall 
that $Q$ is isomorphic to 
$Q(G, \{\overline{\langle s_{x_a}\rangle}\}_{a\in\cM}, \{s_{x_a}\}_{a\in\cM})$ 
where $a\mapsto x_a$ is a section of $\pi$ 
(Proposition \ref{prop_galois_quandle_representation}). 

(1)
If $G$ is abelian, 
the group $\Aut(X, \cM)$ acts on $G$ and $\cM$ 
with the property $\sigma(s_a)=s_{\sigma(a)}$ 
for $\sigma\in\Aut(X, \cM)$ and $a\in \cM$. 
Thus the assertion follows from Lemma \ref{lem_auto} (1). 

(2) 
This is immediate from Lemma \ref{lem_auto} (2). 

(3)
Apply Lemma \ref{lem_auto} (3) 
noting that $\Inn(Q)$ is dense in $\gamma(G)$ 
by Corollary \ref{cor_recover_group_and_primes}. 
\end{proof}

\begin{lemma}\label{lem_auto}
Let $G$ be a topological group, 
$\{z_\lambda\}_{\lambda\in\Lambda}$ a family of elements of $G$ 
and $\{H_\lambda\}_{\lambda\in\Lambda}$ a family of closed 
subgroups of $G$ 
such that $z_\lambda\in H_\lambda$ and 
$H_\lambda$ is contained in the centralizer of $z_\lambda$. 
Let $Q=Q(G, \{H_\lambda\}, \{z_\lambda\})$.

(1)
If $G'$ is a group acting on $G$ and $\Lambda$ 
with $\sigma(z_\lambda)=z_{\sigma(\lambda)}$ 
and $\sigma(H_\lambda)=H_{\sigma(\lambda)}$ 
for any $\sigma\in G'$ and $\lambda\in\Lambda$, 
then $G'$ acts on $Q$ by $\sigma(xH_\lambda):=\sigma(x)H_{\sigma(\lambda)}$. 

(2)
If $G$ is abelian, 
then there is a faithful action of 
$\prod_{\lambda\in\Lambda}G/H_\lambda$ on $Q$. 

If $(Q', G)$ is an augmented quandle, 
$\Lambda=G\backslash Q'$, 
$\{q_\lambda\}_{\lambda\in\Lambda}$ is a complete system of representatives, 
$H_\lambda=\Stab_G(q_\lambda)$ and $z_\lambda=\varepsilon(q_\lambda)$, 
then the action of $\prod_{\lambda\in\Lambda}G/H_\lambda$ on $Q'$ 
induced via the isomorphism of  Proposition \ref{prop_quandle_representation} 
does not depend on the choice of $q_\lambda$. 

(3)
Assume that $G$ is compact 
and write $\gamma: G\to C(Q, Q)$ 
for the map defined by the left translations. 
If $\Inn(Q)$ is dense in $\gamma(G)$, 
then there is a natural homomorphism 
$\Aut(Q)\to S_\Lambda$. 

If $G$ is abelian, 
then the kernel is isomorphic to $\prod_{\lambda\in\Lambda}G/H_\lambda$ 
via the action in (2). 
\end{lemma}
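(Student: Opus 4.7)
The plan is to verify the three parts in turn, using throughout the concrete description $Q = \coprod_\lambda G/H_\lambda$ with $xH_\lambda \rhd yH_\mu = xz_\lambda x^{-1}yH_\mu$; the main obstacle will be pinning down the kernel in the abelian case of (3).

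For (1), well-definedness follows because $xH_\lambda = yH_\lambda$ implies $y^{-1}x \in H_\lambda$, hence $\sigma(y)^{-1}\sigma(x) \in \sigma(H_\lambda) = H_{\sigma(\lambda)}$; compatibility with $\rhd$ is an immediate calculation using $\sigma(z_\lambda) = z_{\sigma(\lambda)}$. For (2), each $(g_\lambda H_\lambda)_\lambda$ defines a map $xH_\mu \mapsto g_\mu xH_\mu$ on the $\mu$-component, well-defined (because $G$ is abelian, the value is independent of the chosen representative of $g_\mu H_\mu$) and commuting with $\rhd$ again by abelianness. Faithfulness is immediate: trivial action forces $g_\mu \in H_\mu$ for every $\mu$. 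For the independence-of-choice in the augmented-quandle case, observe that when $G$ is abelian, replacing $q_\lambda$ by $h_\lambda q_\lambda$ leaves $H_\lambda = \Stab_G(q_\lambda)$ unchanged and, via $\varepsilon(hq) = h\varepsilon(q)h^{-1}$, leaves $z_\lambda$ unchanged; the two resulting isomorphisms to $Q'$ differ by right translation by $h_\lambda$ on each orbit $Gq_\lambda = Gq'_\lambda$, which commutes with the left-translation action of $\prod_\lambda G/H_\lambda$.

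For (3), to construct $\Aut(Q) \to S_\Lambda$, I would first note that any $f \in \Aut(Q)$ satisfies $f s_x f^{-1} = s_{f(x)}$, so $f\,\Inn(Q)\,f^{-1} = \Inn(Q)$. Because each $G/H_\lambda$ is compact (so $Q$ is locally compact), composition on $C(Q,Q)$ is continuous; hence conjugation by $f$ is a continuous self-map of $C(Q,Q)$ and therefore preserves $\overline{\Inn(Q)} = \gamma(G)$. Thus $f$ permutes the $\gamma(G)$-orbits, which are exactly the components indexed by $\Lambda$, giving the sought homomorphism.

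For the kernel when $G$ is abelian, let $f$ preserve each component and set $f(H_\lambda) = g_\lambda H_\lambda$. Abelianness makes $s_{xH_\lambda} = \gamma(z_\lambda)$ depend only on $\lambda$, so $f s_q f^{-1} = s_{f(q)}$ together with $f(q) \in Gq_\lambda$ yields $f\gamma(z_\lambda)f^{-1} = \gamma(z_\lambda)$ for every $\lambda$. Conjugation by $f$ being a continuous group homomorphism of $C(Q,Q)$, it fixes the closed subgroup generated by $\{\gamma(z_\lambda)\}_\lambda$, which by the density hypothesis is $\gamma(G)$; so $f$ commutes with the $G$-action, giving $f(xH_\lambda) = \gamma(x)f(H_\lambda) = xg_\lambda H_\lambda$, exactly the action of $(g_\lambda H_\lambda)_\lambda$ from (2). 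The delicate step, and the main obstacle, is this promotion from commutation with the generators $\gamma(z_\lambda)$ of $\Inn(Q)$ to commutation with all of $\gamma(G)$, which relies essentially on the density hypothesis and the continuity of conjugation in the compact-open topology.
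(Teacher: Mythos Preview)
Your proposal is correct and follows essentially the same approach as the paper. The only minor variation is in part~(3): where you pass through conjugation on $C(Q,Q)$ to show that an automorphism normalizes $\gamma(G)$ and hence permutes its orbits, the paper argues more directly that $\varphi(\Inn(Q)\cdot q)\subseteq\Inn(Q)\cdot\varphi(q)$ and then takes closures (using that $Gq=\overline{\Inn(Q)\cdot q}$ by density and compactness); the kernel computation is identical in spirit.
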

\begin{proof}
Recall that $Q=\coprod_{\lambda\in\Lambda}G/H_\lambda$ 
(Example \ref{ex_quandle_representation}). 

(1)
Note that $\sigma(x)H_{\sigma(\lambda)}$ is equal to 
the set-theoretic image $\sigma(xH_\lambda)$ from the assumption 
that $\sigma(H_\lambda)=H_{\sigma(\lambda)}$. 
Thus the action is well-defined. 

We have 
\[
\sigma(xH_\lambda\rhd yH_\mu)=\sigma(xz_\lambda x^{-1}yH_\mu)
=\sigma(xz_\lambda x^{-1}y)H_{\sigma(\mu)}
\]
and 
\[
\sigma(xH_\lambda)\rhd\sigma(yH_\mu)=\sigma(x)H_{\sigma(\lambda)}\rhd\sigma(y)H_{\sigma(\mu)}
= \sigma(x)z_{\sigma(\lambda)}\sigma(x)^{-1}\sigma(y)H_{\sigma(\mu)}. 
\]
These are equal from the assumption $\sigma(z_\lambda)=z_{\sigma(\lambda)}$, 
so $G'$ acts on $Q$ by quandle automorphisms. 

(2)
For $u=(u_\lambda H_\lambda)\in\prod_{\lambda\in\Lambda}G/H_\lambda$, 
we define $u\cdot xH_\lambda:=u_\lambda xH_\lambda$. 
Noting that $xH_\lambda\rhd yH_\mu=z_\lambda yH_\mu$, 
it is easy to see that $\prod_{\lambda\in\Lambda}G/H_\lambda$ acts on $Q$ 
by quandle automorphisms. 
This action is obviously faithful. 

When $Q$ comes from an augmented quandle $Q'$, 
first note that $\varepsilon(q_\lambda)$ and $\Stab_G(q_\lambda)$ 
are independent of the choice of $q_\lambda$. 
The isomorphism 
$\varphi_q: Q\overset{\sim}{\to} Q'$ 
depends on the choice of $\{q_\lambda\}$, 
and if $\{q'_\lambda\}$ is another system of representatives 
related with $\{q_\lambda\}$ by $q'_\lambda=g_\lambda q_\lambda$, $g_\lambda\in G$, 
then $\varphi_q^{-1}\circ\varphi_{q'}$ is equal to the translation by $(g_\lambda H_\lambda)$. 
This commutes with the action of  $\prod_{\lambda\in\Lambda}G/H_\lambda$, 
so the actions on $Q'$ induced by $\varphi_q$ and $\varphi_{q'}$ 
are the same.

(3)
Note that $s_{xH_\lambda}$ is the left translation by $xz_\lambda x^{-1}$, 
so we always have $\Inn(Q)\subseteq\gamma(G)$. 

Let $\varphi$ be an element of $\Aut(Q)$ 
and $q=xH_\lambda$ an element of $Q$. 
Then we have $\varphi(\Inn(Q)q)\subseteq \Inn(Q)\varphi(q)$. 
If $\Inn(Q)$ is dense in $\gamma(G)$, 
then we have $\varphi(G/H_\lambda)=\varphi(Gq)\subseteq G \varphi(q)$ by continuity. 
We see the other inclusion by considering $\varphi^{-1}$. 
Thus $\varphi$ maps $G/H_\lambda$ to some $G/H_\mu$, 
and induces a permutation on $\Lambda$. 

The kernel of $\Aut(Q)\to S_\Lambda$ 
consists of automorphisms preserving each $G/H_\lambda$. 
Assume that $G$ is abelian and 
let $\varphi$ be such an automorphism. 
If $q\in G/H_\lambda\subseteq Q$, 
then $s_q$ is the left translation by $z_\lambda$. 
By assumption, $\varphi(q)$ is in the same $G/H_\lambda$, 
so we have $s_q=s_{\varphi(q)}$. 
From $\varphi\circ s_q=s_{\varphi(q)}\circ\varphi$ 
it follows that $\varphi$ commutes with $\Inn(Q)$. 
By continuity it also commutes with $G$. 
Let $u_\lambda$ be such that 
$\varphi(H_\lambda)=u_\lambda H_\lambda\in G/H_\lambda$ 
and write $u=(u_\lambda H_\lambda)\in\prod_{\lambda\in\Lambda}G/H_\lambda$. 
Then we have 
$\varphi(xH_\lambda)=x\varphi(H_\lambda)=xu_\lambda H_\lambda=
u_\lambda xH_\lambda=u\cdot xH_\lambda$ 
for any $x\in G$ and $\lambda\in\Lambda$, 
so $\varphi$ coincides with the action of $u$. 
\end{proof}

\begin{corollary}
Let $K$ be either $\bQ$ or a quadratic field, 
$\fp$ a prime in $K$, 
$X=\Spec\cO_K\setminus\{\fp\}$ and 
$\cM\subseteq X$ a set of primes of density $1$. 
If $G:=\pi_1^{\mathrm{ab}}(X)$ is infinite, 
there is a natural exact sequence 
\[
1\to \prod_{a\in\cM} G/\overline{\langle s_a\rangle}
\to\Aut(Q^{\mathrm{ab}}(X, \cM))
\to \prod_{l\in \Spec\bZ} S_{c^{-1}(l)}. 
\]

Furthermore, 
$\Aut(Q^{\mathrm{ab}}(X, \cM))$ 
has a subgroup isomorphic to $\Aut(X, \cM)$, 
and except in the case where $K$ is real quadratic, we have 
\[
\Aut(Q^{\mathrm{ab}}(X, \cM)) = 
\Aut(X, \cM)\ltimes \prod_{a\in\cM} G/\overline{\langle s_a\rangle}. 
\]
\end{corollary}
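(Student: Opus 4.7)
The plan is to bootstrap on the preceding proposition and then plug in Theorem \ref{thm_recover} applied to $\varphi:Q\to Q$ (i.e.\ the case where source and target coincide). Since $G=\pi_1^{\ab}(X)$ is abelian, part (3) of the preceding proposition already supplies the exact sequence
\[
1\to\prod_{a\in\cM}G/\overline{\langle s_a\rangle}\to\Aut(Q)\to S_\cM,
\]
so three things remain: (a) refining the target to $\prod_{l\in\Spec\bZ}S_{c^{-1}(l)}$, (b) producing the embedding $\Aut(X,\cM)\hookrightarrow\Aut(Q)$, and (c) splitting the sequence outside the real quadratic case.

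For (a), fix $\varphi\in\Aut(Q)$ and regard it as a self-isomorphism of topological quandles. Applying Theorem \ref{thm_recover} with $(K',\fp',\cM')=(K,\fp,\cM)$ yields that $\varphi$ commutes with $\rho$, i.e.\ $c(\bar\varphi(\fl))=c(\fl)$ for every $\fl\in\cM$. Hence the induced permutation $\bar\varphi\in S_\cM$ preserves every fibre $c^{-1}(l)$, and the refined exact sequence follows.

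For (b), choose any section of $\pi:Q\to\cM$ and invoke part (1) of the preceding proposition to obtain an action $\Aut(X,\cM)\to\Aut(Q)$. Injectivity is routine in our setting: $\Aut(X,\cM)$ is trivial when $K=\bQ$, and when $K$ is quadratic it is at worst generated by complex conjugation $\iota$; the density-$1$ hypothesis on $\cM$ forces it to contain infinitely many split primes $\fl$ with $\bar\fl\neq\fl$, so $\iota$ acts nontrivially on $\cM$ and hence on $Q$.

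For (c), start with any $\varphi\in\Aut(Q)$ and invoke the ``Furthermore'' clause of Theorem \ref{thm_recover}, which applies outside the real quadratic case, to produce $\sigma\in\Aut(K)$ with $\sigma(\fp)=\fp$ and $\sigma(\fl)=\bar\varphi(\fl)$ for every $\fl\in\cM$. This $\sigma$ determines $\tau\in\Aut(X,\cM)$ and, via (b), an element $\tau^{*}\in\Aut(Q)$. Since $\varphi\circ(\tau^{*})^{-1}$ induces the identity permutation of $\cM$, the exact sequence of (a) places it in $\prod_{a\in\cM}G/\overline{\langle s_a\rangle}$, which splits the sequence. Triviality of the intersection $\Aut(X,\cM)\cap\prod_{a}G/\overline{\langle s_a\rangle}$ inside $\Aut(Q)$ is the faithfulness used in (b). The only real content is in step (c): it is where the transcendence input of Theorem \ref{thm_recover} enters, and the exclusion of the real quadratic case in the corollary is forced for exactly the reason it appears in the theorem --- there one may find $\varphi\in\Aut(Q)$ whose induced permutation of $\cM$ is not realised by any element of $\Aut(X,\cM)$, and such a $\varphi$ represents a genuinely new quandle automorphism.
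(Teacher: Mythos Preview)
Your proof is correct and follows essentially the same route as the paper: invoke the preceding proposition for the basic exact sequence and the section-induced action of $\Aut(X,\cM)$, then apply Theorem~\ref{thm_recover} (with $K'=K$) first to sharpen the target of the sequence to $\prod_l S_{c^{-1}(l)}$ and then, via its ``Furthermore'' clause outside the real quadratic case, to realise the induced permutation of $\cM$ by an element of $\Aut(X,\cM)$ and thereby split the sequence. The paper's argument for injectivity of $\Aut(X,\cM)\to\Aut(Q)$ is phrased slightly differently (it notes that the composite $\Aut(X,\cM)\to S_\cM$ is the natural action, hence faithful), but this is the same content as your explicit check; a minor terminological quibble is that for real quadratic $K$ the nontrivial automorphism is Galois conjugation rather than complex conjugation.
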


\begin{proof}
By the main theorem, 
an automorphism $\varphi$ of $Q^{\mathrm{ab}}(X, \cM)$
commutes with $c\circ \pi$. 
In other words, the permutation on $\cM$ induced by $\varphi$ 
preserves fibers of $c$, 
hence the first assertion. 

By the previous proposition, 
a choice of section of $\pi$ 
gives a homomorphism 
$\Aut(X, \cM)\to\Aut(Q^{\mathrm{ab}}(X, \cM))$. 
The composite $\Aut(X, \cM)\to S_\cM$ 
corresponds to the natural action on $\cM$, 
so it is injective. 
By the main theorem, 
$\Aut(Q)$ is generated by $\Aut(X, \cM)$ 
and automorphisms preserving fibers of $\pi$ 
except in the real quadratic case, 
hence the second assertion. 
\end{proof}

\begin{example}
If $K$ is quadratic, 
$\prod_{l\in \Spec\bZ} S_{c^{-1}(l)}$ 
is much bigger than $\Aut(X, \cM)$. 
In the real quadratic case, 
it is possible that 
$\Aut(Q^{\mathrm{ab}}(X, \cM))\to \prod_{l\in \Spec\bZ} S_{c^{-1}(l)}$ 
is surjective. 

In fact, let $K=\bQ(\sqrt{5})$ and 
$\fp=3\cO_K$, which is a prime ideal, 
and $\cM$ the set of primes of $K$ different from $\fp$ and $(\sqrt{5})$. 

Let $\tilde{K}$ be the maximal abelian extension unramified outside $\fp$. 
It contains $K(\zeta_{3^\infty})$. 
We have $K\cap \bQ(\zeta_{3^\infty})=\bQ$ 
since $K$ is unramified over $3$. 
Therefore 
$\Gal(K(\zeta_{3^\infty})/K)\cong\Gal(\bQ(\zeta_{3^\infty})/\bQ)\cong\bZ_3^\times
\cong (\bZ/2\bZ)\times\bZ_3$. 

Since $K$ has narrow class number $1$, 
$\Gal(\tilde{K}/K)$ is isomorphic to $\cO_{K_\fp}^\times/\overline{\cO_K^{\times, +}}$. 
The group $\cO_{K_\fp}^\times$ can be decomposed as 
$\mu_8 \times (1+3\cO_K)$. 
There is a totally positive unit 
$\gamma:=\{(1+\sqrt{5})/2\}^2$, 
and it has order $4$ modulo $1+3\cO_K$. 
We have $\gamma^4=1+3\cdot\frac{15+7\sqrt{5}}{2}$, 
which spans a primitive $\bZ_3$-submodule of $1+3\cO_K$. 
Thus 
$\cO_{K_\fp}^\times/\overline{\cO_K^{\times, +}}$ is isomorphic to 
a quotient of $(\bZ/2\bZ)\times \bZ_3$. 
It follows that the surjection 
$\Gal(\tilde{K}/K)\to\Gal(K(\zeta_{3^\infty})/K)$ 
is in fact an isomorphism, 
i.e. $\tilde{K}=K(\zeta_{3^\infty})$. 

Let $l$ be a prime that splits as $\fl\cdot\bar{\fl}$ in $K$. 
Then $\fl$ and $\bar{\fl}$ define the same element of $G:=\Gal(\tilde{K}/K)$, 
i.e. the automorphism given by $\zeta_{3^n}\mapsto\zeta_{3^n}^l$. 
Thus we can exchange the fibers of $Q^{\mathrm{ab}}(X, \cM)$ 
over $\fl$ and $\bar{\fl}$ by a quandle automorphism. 
In this case we have 
$\Aut(Q^{\mathrm{ab}}(X, \cM))= 
\prod_{l\in \Spec\bZ} 
\left(S_{c^{-1}(l)} \ltimes 
\prod_{\fl\in c^{-1}(l)} \bZ_3^\times/\overline{\langle N(\fl) \rangle}\right)$. 
\end{example}


\begin{thebibliography}{NSW2008}

\bibitem[I14]{Ivanov2014}
A. Ivanov, 
\textit{On some anabelian properties of arithmetic curves}, 
Manuscripta Math. \textbf{144}(2014), no. 3, 545--564. 

\bibitem[J82]{Joyce1982}
D. Joyce, 
\textit{A classifying invariant of knots, the knot quandle}, 
J. Pure Appl. Algebra \textbf{23}(1982), no. 1, 37--65. 

\bibitem[M82]{Matveev1982}
S. V. Matveev, 
\textit{Distributive groupoids in knot theory} (Russian), 
Mat. Sb. \textbf{119}(1982), 78--88. 
(English translation: Math. USSR-Sb. \textbf{47} (1984), no. 1, 73--83). 

\bibitem[M08]{Mochizuki2008}
S. Mochizuki, 
\textit{The geometry of Frobenioids. I. The general theory},  
Kyushu J. Math. \textbf{62}(2008), no. 2, 293--400. 

\bibitem[M12]{Morishita2012}
M. Morishita, 
Knots and primes. Universitext, Springer, London, 2012.

\bibitem[NSW08]{NSW2008}
J. Neukirch, A. Schmidt, K. Wingberg, 
Cohomology of number fields, Second edition. 
Grundlehren der Mathematischen Wissenschaften, 
Fundamental Principles of Mathematical Sciences, 
Vol. 323. Springer-Verlag, Berlin, 2008.

\bibitem[R07]{Rubinsztein2007}
R. L. Rubinsztein, 
\textit{Topological quandles and invariants of links}, 
J. Knot Theory Ramifications \textbf{16}(2007), no. 6, 789--808. 

\bibitem[S66]{Serre1966}
J.-P., Serre, 
\textit{D\'ependance d'exponentielles $p$-adiques}, 
S\'eminaire Delange-Pisot-Poitou (Th\'eorie des Nombres), 
7\`e ann\'ee, 1965/66, no. 15. 

\bibitem[T97]{Tamagawa1997}
A. Tamagawa, 
\textit{The Grothendieck conjecture for affine curves}, 
Compositio Math.  \textbf{109}(1997),  no. 2, 135--194. 

\bibitem[W81]{Waldschmidt1981}
M. Waldschmidt, 
\textit{Transcendance et exponentielles en plusieurs variables}, 
Invent. Math. \textbf{63}(1981), no. 1, 97--127. 

 
\end{thebibliography}
\end{document}